\newtheorem{theorem}{Theorem}
\newtheorem{lemma}[theorem]{Lemma}
\newtheorem{proposition}[theorem]{Proposition}
\theoremstyle{definition}
\newtheorem{definition}[theorem]{Definition}
\newtheorem{remark}[theorem]{Remark}
\author{Pablo Romero\footnote{Facultad de Ingenier\'ia, Universidad de la Rep\'ublica, Montevideo, Uruguay. E-mail address: \texttt{promero@fing.edu.uy}}
\footnote{Facultad de Ciencias Exactas y Naturales, Universidad de Buenos Aires, Ciudad Universitaria. Av. Int. Güiraldes 2160. Buenos Aires, Argentina.}}
\date{}
\begin{document}

\title{Characterization of locally most split reliable graphs}

\maketitle

\begin{abstract}\let\thefootnote\relax
A two-terminal graph is a graph equipped with two distinguished vertices, called terminals.  Let $T_{n,m}$ be the set of all nonisomorphic connected simple two-terminal graphs on $n$ vertices and $m$ edges. Let $G$ be any two-terminal graph in $T_{n,m}$. For every number $p$ in $[0,1]$ we let each of the edges in $G$ be independently deleted with probability $1-p$. The \emph{split reliability} $SR_{G}(p)$ is the probability that the resulting spanning subgraph has precisely $2$ connected components, each one including one terminal. The two-terminal graph $G$ is \emph{uniformly most split reliable} if $SR_G(p)\geq SR_{H}(p)$ for each $H$ in $T_{n,m}$ and every $p$ in $[0,1]$. 
We say $G$ is \emph{locally most split reliable} if there exists $\delta>0$ such that 
$SR_G(p)\geq SR_{H}(p)$ for each $H$ in $T_{n,m}$ and every $p$ in $(1-\delta,1)$. 

Brown and McMullin showed that there exists uniformly most split reliable graphs in each class $T_{n,m}$ such that $m=n-1$, $m=\binom{n}{2}$, or $m=\binom{n}{2}-1$. The authors also proved that there is no uniformly most split reliable two-terminal graph in $T_{n,n}$ when $n\geq 6$ and 
specified in which classes $T_{n,m}$ such that $n\leq 7$ there exist uniformly most split reliable graphs. The existence or nonexistence of uniformly most split reliable graphs in the remaining cases is posed by Brown and McMullin as an open problem. 

In this work, the set $\mathcal{G}_{n,m}$ consisting of all locally most split reliable graphs is characterized in each nonempty class $T_{n,m}$. It is proved that a graph in $T_{n,m}$ is locally most split reliable if and only if 
its split reliability equals that of the balloon graph equipped with two terminals whose distance equals its diameter. Finally, it is proved that there is no  uniformly most split reliable graph in $T_{n,m}$ when $n\geq 7$ and $n\leq m \leq \binom{n-3}{2}+3$.
\end{abstract}

\renewcommand{\labelitemi}{--}

\section{Introduction}\label{section:motivation}
In a seminal article~\cite{1956-MooreShannon}, Moore and Shannon proved using probabilistic methods that highly reliable circuits can be constructed from a sufficiently large number of identical imperfect relays. The key in their construction is to maximize the connectedness probability between two fixed points in a circuit using imperfect components, what is now known as the two-terminal reliability. It is known~\cite{1983-ProvanBall} that the two-terminal reliability evaluation in general graphs belongs to the class of NP-Hard problems. From an optimization viewpoint, if we are given a fixed number of points (vertices) including two terminals and a fixed number of imperfect components (edges) with identical failure probability, a question that arises is how to connect these points using such components in order to maximize the two-terminal reliability, regardless of the failure probability. This problem has been mathematically formalized by Bertrand et al.~\cite{Bertrand-2018}. Remarkably, it was recently proved that, except for very few graph classes, there is no uniformly most reliable two-terminal graphs~\cite{Xie-2021}. The locally most reliable two-terminal graphs for large failure probabilities were determined in~\cite{Gong-2024}, except for very few graph classes as well. The interested reader can consult the survey~\cite{2022-Romero-Survey} for  details.
 
In this work we will study an alternative problem that was recently proposed by Brown and McMullin~\cite{2023-Brown}. We are given a fixed number of vertices including two terminals and a fixed number of edges each one with identical failure probability. We want to find a two-terminal graph that maximizes the probability that the graph is split into precisely two parts each one including one terminal, regardless of the failure probability of the edges. If such two-terminal graph exists, it is called uniformly most split reliable graph. 
The practical interest in the determination of uniformly most split reliable graphs appears in the dissemination of nonconflicting messages transmitted by $2$ different sources.
In an analogy with~\cite{Gong-2024}, a characterization of the set $\mathcal{G}_{n,m}$ consisting of all the locally most split reliable graphs is given in each of the nonempty classes $T_{n,m}$ of connected simple two-terminal graphs on $n$ vertices and $m$ edges. In an analogy with the previous work~\cite{Xie-2021}, here it is proved that, in most graph classes, there is no uniformly most split reliable two-terminal graph. A description of the problem as well as its origin is given in the following paragraphs.\\

Let $G$ be any connected multigraph. The \emph{reliability polynomial} $R_G(p)$ is the probability of $G$ being connected after each of its edges is independently deleted with probability $1-p$. 
Brown and Colbourn~\cite{1992-BrownColbourn} conjectured that the roots of the reliability polynomial always lie in the closed unit disc of the complex plane. Royle and Sokal~\cite{2004-Royle} proved the conjecture false, finding roots of modulus larger than $1$ by a slim margin. Later, Brown and Mol~\cite{2017-Brown} constructed graphs whose reliability roots have larger modulus than any previously known. An essential tool in their construction is the introduction of the \emph{split reliability}. A \emph{two-terminal multigraph} is a multigraph equipped with two distinguished vertices, called terminals.  
Let $M_{n,m}$ be the set consisting of all two-terminal multigraphs on $n$ vertices and $m$ edges. 
Let $p\in [0,1]$ and $G$ in $M_{n,m}$. Let each of the edges in $G$ be independently deleted with probability $1-p$. 
The \emph{split reliability of $G$}, denoted $SR_G(p)$, is the probability that the resulting spanning subgraph of $G$ has precisely two connected components each one including one of its terminals. A graph $G$ is \emph{optimal} if $SR_G(p)\geq SR_H(p)$ for each $H$ in $M_{n,m}$ and every $p$ in $[0,1]$.  
Brown and McMullin~\cite{2023-Brown} succeeded to determine whether or not there exists some optimal graph in each nonempty class $M_{n,m}$. In their conclusions the authors give some comments on the corresponding problem of finding \emph{optimal simple graphs} in the restricted classes of connected \emph{simple} two-terminal graphs $T_{n,m}$ (i.e., whether or not there exists some $G$ in $T_{n,m}$ such that $SR_G(p)\geq SR_H(p)$ for each $H$ in $T_{n,m}$ and every $p$ in $[0,1]$). On the one hand, using 
a simple argument they proved that there is no optimal simple graphs in each class $T_{n,m}$ such that $m=n$ and $n\geq 6$. On the other hand, they proved that there exist optimal simple graphs in each nonempty class $T_{n,m}$ such that $m=n-1$, $m=\binom{n}{2}$, or $m=\binom{n}{2}-1$. 
Additionally, calculations on small graphs show that, among all classes $T_{n,m}$ such that $n\leq 7$, there exist optimal simple graphs if and only if one of the following conditions holds: (i) $n\leq 5$, (ii) $n=6$ and $m\notin \{6,8\}$, or (iii) $n=7$ and $m\in \{6,14,15,\ldots,21\}$. Finally, they stated that ``the existence or nonexistence of optimal simple graphs in other cases is left as an open problem worth exploring.'' Thus far, the determination of optimal simple graphs was only achieved for the aforementioned classes $T_{n,m}$.\\ 

Here we adapt the terminology presented in~\cite{2023-Brown} and use the term \emph{uniformly most split reliable graph} instead of optimal simple graph for descriptive reasons. The main results of this work are Theorem~\ref{theorem:local1} and Theorem~\ref{theorem:nonexistence}. On the one hand, Theorem~\ref{theorem:local1} gives a characterization of the locally most split reliable graphs in each nonempty class $T_{n,m}$. 
On the other hand, Theorem~\ref{theorem:nonexistence} asserts that there is no uniformly most split reliable graph in $T_{n,m}$ when $n\geq 7$ and $n\leq m \leq \binom{n-3}{2}+3$. 

The document is organized as follows. Section~\ref{section:strategy} presents some concepts related with split reliability, the statements of Theorem~\ref{theorem:local1} and Theorem~\ref{theorem:nonexistence}, 
and a proof strategy for both theorems. As it will turn out, the most split reliable graphs consists precisely of the balloon graph~\cite{1996-Petingi} equipped with two terminals whose distance equals the diameter of the balloon together with all two-terminal graphs whose split reliability equals that of the balloon graph. Consequently, Section~\ref{section:properties} presents some remarkable properties satisfied by balloon graphs. 
A characterization of locally most split reliable two-terminal graphs is given in Section~\ref{section:class}. Finally, in Section~\ref{section:main} it is proved that there is no uniformly most split reliable graph in $T_{n,m}$ when $n\geq 7$ and $n\leq m \leq \binom{n-3}{2}+3$. 

\section{Proof strategy}\label{section:strategy}
In this section we will first give some key concepts related with split reliability in simple graphs. Then, we will close this section with the proof strategy of the main results of this work, namely, Theorem~\ref{theorem:local1} and Theorem~\ref{theorem:nonexistence} which state, respectively, the existence and construction of locally most split reliable graphs in each nonempty class $T_{n,m}$, 
and the nonexistence of uniformly most split reliable graphs in $T_{n,m}$ when $n\geq 7$ and $n\leq m \leq \binom{n-3}{2}+3$.\\

A pair of two-terminal graphs $G$ and $H$ are \emph{isomorphic}, and it will be denoted $G \cong H$,
if there exists some graph isomorphism $\varphi:V(G) \to V(H)$ such that the terminal set $\{s,t\}$ is invariant (i.e., either $\varphi(s)=s$ and $\varphi(t)=t$, or $\varphi(s)=t$ and $\varphi(t)=s$). 
Let $T_{n,m}$ be the set of all nonisomorphic connected simple two-terminal graphs on $n$ vertices and $m$ edges. Throughout this article we will use the symbols $s$ and $t$ for the terminal vertices unless specified otherwise. Let $G$ be any graph in $T_{n,m}$. A \emph{split subgraph of $G$} is a spanning subgraph of $G$ having precisely $2$ connected components, each one including  one terminal. For each $p$ in $[0,1]$, the \emph{split reliability of $G$ evaluated at $p$}, denoted $SR_G(p)$, is the probability of having a split subgraph of $G$ after each of its edges is independently deleted with probability $1-p$. 
For each $i\in \{0,1,\ldots,m\}$ we define $N_i(G)$ as the number of split subgraphs of $G$ having precisely $i$ edges. Clearly,
\begin{equation*}
SR_{G}(p) = \sum_{i=n-2}^{m-1}N_i(G)p^i(1-p)^{m-i}.    
\end{equation*}

\begin{definition} A two-terminal graph $G$ in $T_{n,m}$ 
is \emph{uniformly most split reliable} if for each $H$ in $T_{n,m}$ and every $p$ in $[0,1]$, $SR_{G}(p)\geq SR_{H}(p)$.
\end{definition}

\begin{definition} A two-terminal graph $G$ in $T_{n,m}$ 
is \emph{locally most split reliable} if for each $H$ in $T_{n,m}$ 
there exists $\delta>0$ such that $SR_{G}(p)\geq SR_{H}(p)$ for every $p$ in $(1-\delta,1)$. 
\end{definition}

For each $G$ in $T_{n,m}$ and each $i$ in $\{0,1,\ldots,m\}$ we define the number $F_i(G)$ by $F_i(G)=N_{m-i}(G)$. Lemma~\ref{lemma:local} will be essential in the proof of Theorem~\ref{theorem:local1}. 
It can be proved using elementary analysis.
\begin{lemma}[Brown and McMullin~\cite{2023-Brown}] \label{lemma:local}
Let $G$ and $H$ be in $T_{n,m}$. The following assertions hold:
\begin{enumerate}[label=(\roman*)]
\item\label{l1} If there exists $i\in \{0,1,\ldots,m\}$ such that $N_k(G)=N_k(H)$ for all $k\in \{0,1,\ldots,i-1\}$ and $N_i(G)>N_i(H)$, then there exists $\delta>0$ such that $SR_{G}(p)>SR_H(p)$ for all $p\in (0,\delta)$.
\item\label{l2} If there exists $j\in \{0,1,\ldots,m\}$ such that $F_k(G)=F_k(H)$ for all $k\in \{0,1,\ldots,j-1\}$ and $F_j(G)>F_j(H)$, then there exists $\delta>0$ such that $SR_{G}(p)>SR_H(p)$ for all $p\in (1-\delta,1)$.
\end{enumerate}
\end{lemma}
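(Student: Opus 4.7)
The plan is to express $SR_G(p) - SR_H(p)$ as a polynomial and read off its sign near the relevant endpoint from the lowest-order nonvanishing term. For part (i), I would work directly in the variable $p$; for part (ii), I would change variables to $q = 1-p$ so the two parts become formally identical.

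For part (i), subtracting the polynomial expression for $SR_H(p)$ from that of $SR_G(p)$ gives
\begin{equation*}
SR_G(p) - SR_H(p) = \sum_{k=n-2}^{m-1}\bigl(N_k(G) - N_k(H)\bigr)\, p^k (1-p)^{m-k}.
\end{equation*}
By hypothesis, all terms with $k < i$ vanish, so I can factor out $p^i$:
\begin{equation*}
SR_G(p) - SR_H(p) = p^i \left[ \bigl(N_i(G)-N_i(H)\bigr)(1-p)^{m-i} + \sum_{k=i+1}^{m-1}\bigl(N_k(G)-N_k(H)\bigr)\, p^{k-i}(1-p)^{m-k} \right].
\end{equation*}
The bracketed expression is a polynomial in $p$ that is continuous at $p = 0$, where it evaluates to $N_i(G) - N_i(H) > 0$. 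Hence there exists $\delta > 0$ such that the bracket is strictly positive on $[0,\delta)$, and since $p^i > 0$ for $p \in (0,\delta)$, the strict inequality $SR_G(p) > SR_H(p)$ follows on $(0,\delta)$.

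For part (ii), the substitution $q = 1 - p$ together with the reindexing $j = m - k$ yields
\begin{equation*}
SR_G(p) - SR_H(p) = \sum_{j=1}^{m-n+2}\bigl(F_j(G) - F_j(H)\bigr)(1-q)^{m-j} q^j,
\end{equation*}
which has exactly the same shape as the expression analyzed in (i), but with $q$ playing the role of $p$ and $F_j$ playing the role of $N_k$. Applying the same factorization, factoring out $q^j$ and using continuity of the bracket at $q = 0$, produces a $\delta > 0$ such that $SR_G(p) - SR_H(p) > 0$ whenever $q \in (0,\delta)$, i.e., whenever $p \in (1-\delta, 1)$.

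There is no serious obstacle here: the argument is purely a statement about the behavior of a polynomial near $0$ (or near $1$, after a change of variable), driven entirely by the first nonvanishing coefficient. The only bookkeeping to check carefully is that the lower limit of summation after the factorization does not accidentally include negative powers of $p$ or $q$, which is automatic since we factor out exactly the smallest exponent appearing with a nonzero coefficient.
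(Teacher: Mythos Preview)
Your argument is correct and is precisely the ``elementary analysis'' the paper alludes to; note that the paper itself does not give a proof of this lemma but merely attributes it to Brown and McMullin and states that it follows by elementary analysis. Your factor-out-the-lowest-power approach is the standard way to justify such a statement, and the change of variables $q=1-p$ for part~(ii) is exactly the right reduction.
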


Now, we will propose a method to construct the set $\mathcal{G}_{n,m}$ consisting of locally most split reliable graphs in each nonempty set $T_{n,m}$. The following concept will be essential.

\begin{definition}
A pair of two-terminal graphs $G$ and $H$ in $T_{n,m}$ are \emph{split-equivalent} if $SR_G(p)=SR_H(p)$ for every $p$ in $[0,1]$.     
\end{definition}

Define, for each graph $G$ in $T_{n,m}$, the tuple $F(G)$ given by $(F_1(G),F_2(G),\ldots,F_m(G))$. 
Recall that the set consisting of Bernstein polynomials $\{b_{i,m}: i\in \{0,1,\ldots,m\}\}$ where $b_{i,m}(x)=\binom{m}{i}x^i(1-x)^{m-i}$ is a basis of the subspace of polynomials with degree not greater than $m$. As a consequence, the following remark holds.
\begin{remark}\label{remark:splitequivalent}
A pair of two-terminal graphs $G$ and $H$ in $T_{n,m}$ are split-equivalent if and only if 
$F(G)=F(H)$.
\end{remark}
Let $T_{n,m}$ be any nonempty set of two-terminal graphs. Clearly, $T_{n,m}$ can be partitioned 
into finitely many classes consisting of split-equivalent two-terminal graphs. 
For each $G$ in $T_{n,m}$, let $[G]$ be the class of two-terminal graphs that are equivalent to $G$, i.e., $[G]=\{H: H\in T_{n,m}, \, F(G)=F(H)\}$.

Now, let $\mathcal{G}_{n,m}$ the set consisting of locally most split reliable graphs in $T_{n,m}$. 
Observe that $\mathcal{G}_{n,m}$ equals $[G]$ for some two-terminal graph $G$ in $T_{n,m}$. In fact, if we pick any pair of elements $G_1$ and $G_2$ in $\mathcal{G}_{n,m}$ 
then their split reliability polynomials coincide in an interval of $[0,1]$ with positive measure thus $SR_{G_1}(p)=SR_{G_2}(p)$ and $[G_1]=[G_2]$. 
From Lemma~\ref{lemma:local}\ref{l2}, each locally most split reliable graph $G$ in $\mathcal{G}_{n,m}$ maximizes the tuple $F(G)$ using the lexicographic order among all two-terminal graphs $G$ in $T_{n,m}$. 
The determination of the tuple $F(G)$ for an arbitrary two-terminal graph $G$ in $T_{n,m}$ belongs to the class of $\#$P-Complete problems~\cite{2023-Brown}. 
Nevertheless, in the following paragraphs, we will propose a method to find $\mathcal{G}_{n,m}$. This method will be accomplished in 
Section~\ref{section:class}.
 
Define $T_{n,m}^{(0)}$ as $T_{n,m}$ and, for each $i\in \{0,1,\ldots,m-1\}$ we define $T_{n,m}^{(i+1)}$ as $\{G:G\in T_{n,m}^{(i)}\, \, F_{i+1}(G)\geq F_{i+1}(H)\,\text{ for each } H \text{ in } T_{n,m}^{(i)}\}$. A consequence of Lemma~\ref{lemma:local} is the following result.
\begin{lemma}\label{lemma:construction}
The set $\mathcal{G}_{n,m}$ consisting of all most split reliable graphs in each nonempty class $T_{n,m}$ is precisely the nonempty set $T_{n,m}^{(m)}$, i.e., $\mathcal{G}_{n,m}=T_{n,m}^{(m)}$.
\end{lemma}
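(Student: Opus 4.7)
My plan is to interpret the iterative construction $T_{n,m}^{(0)}\supseteq T_{n,m}^{(1)}\supseteq\cdots\supseteq T_{n,m}^{(m)}$ as the standard ``peeling'' procedure that selects the graphs whose tuple $F(G)$ is lexicographically maximum, and then to translate lex-maximality into the locally most split reliable property through Lemma~\ref{lemma:local}\ref{l2}. Since $T_{n,m}$ is finite and nonempty, each $T_{n,m}^{(i+1)}$ is the argmax of a real-valued function on a finite nonempty set, so it is nonempty; induction on $i$ gives $T_{n,m}^{(m)}\neq\emptyset$. A key observation used throughout is that if $G,H\in T_{n,m}^{(k)}$, then for every $i\in\{1,\ldots,k\}$ the two values $F_i(G)$ and $F_i(H)$ both equal $\max\{F_i(X):X\in T_{n,m}^{(i-1)}\}$, so $F_i(G)=F_i(H)$.

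To show $T_{n,m}^{(m)}\subseteq \mathcal{G}_{n,m}$, I fix $G\in T_{n,m}^{(m)}$ and an arbitrary $H\in T_{n,m}$. If $F(G)=F(H)$, then $SR_G\equiv SR_H$ by Remark~\ref{remark:splitequivalent} and there is nothing to prove. Otherwise let $j$ be the smallest index with $F_j(G)\neq F_j(H)$. A short induction shows $H\in T_{n,m}^{(j-1)}$: the base case is $H\in T_{n,m}^{(0)}=T_{n,m}$, and if $H\in T_{n,m}^{(i)}$ for some $i\le j-2$, then $F_{i+1}(H)=F_{i+1}(G)$ equals the maximum of $F_{i+1}$ on $T_{n,m}^{(i)}$, so $H\in T_{n,m}^{(i+1)}$. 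Combining $H\in T_{n,m}^{(j-1)}$ with $G\in T_{n,m}^{(j)}$ forces $F_j(G)\geq F_j(H)$, hence $F_j(G)>F_j(H)$, and Lemma~\ref{lemma:local}\ref{l2} produces $\delta>0$ with $SR_G(p)>SR_H(p)$ on $(1-\delta,1)$. Thus $G$ is locally most split reliable.

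For the reverse inclusion $\mathcal{G}_{n,m}\subseteq T_{n,m}^{(m)}$, I argue by contradiction: if $G\in\mathcal{G}_{n,m}\setminus T_{n,m}^{(m)}$, pick the minimal $j$ with $G\notin T_{n,m}^{(j)}$, so $G\in T_{n,m}^{(j-1)}$ but the defining inequality fails, i.e.\ some $H\in T_{n,m}^{(j-1)}$ satisfies $F_j(H)>F_j(G)$. By the key observation from the first paragraph, $F_i(G)=F_i(H)$ for every $i<j$, and Lemma~\ref{lemma:local}\ref{l2} applied with the roles of $G$ and $H$ swapped yields $SR_H(p)>SR_G(p)$ on a left neighborhood of $1$, contradicting $G\in\mathcal{G}_{n,m}$. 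The whole argument is essentially bookkeeping; the only point that needs a moment of thought is the induction matching the filtration $T_{n,m}^{(k)}$ with the set of graphs realizing the lex-maximum prefix $(F_1^*,\ldots,F_k^*)$, and the rest is driven directly by Lemma~\ref{lemma:local}\ref{l2}, so I do not anticipate a substantial obstacle.
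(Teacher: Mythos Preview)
Your proposal is correct and follows essentially the same approach as the paper: both arguments establish that $T_{n,m}^{(m)}$ is nonempty by finite induction, identify the filtration $T_{n,m}^{(0)}\supseteq\cdots\supseteq T_{n,m}^{(m)}$ with lexicographic maximization of the tuple $F(G)$, and then invoke Lemma~\ref{lemma:local}\ref{l2} at the first index where two graphs disagree. The only cosmetic difference is that the paper handles the reverse inclusion implicitly (by showing every $H\notin T_{n,m}^{(m)}$ is strictly beaten near $p=1$ by any $G\in T_{n,m}^{(m)}$), whereas you spell it out as a separate contradiction argument; the content is the same.
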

\begin{proof}
As the set $T_{n,m}$ is both finite nonempty then a finite induction shows that, for each $i\in \{0,1,\ldots,m\}$ 
it holds that $T_{n,m}^{(i)}$ is also finite and nonempty. In particular, the set $T_{n,m}^{(m)}$ is finite and nonempty. By construction, each pair of two-terminal graphs $G_1$ and $G_2$ in $T_{n,m}^{(m)}$ satisfy that $F(G_1)=F(G_2)$. 
Remark~\ref{remark:splitequivalent} gives that $G_1$ and $G_2$ are 
split-equivalent. Now, let $G$ be any element in $T_{n,m}^{(m)}$ and $H$ be any element in $T_{n,m}-T_{n,m}^{(m)}$. 
As $H$ is not in $T_{n,m}^{(m)}$, we can pick the first index $j$ in $\{1,2,\ldots,m\}$ such that $H$ is not in $T_{n,m}^{(j)}$. As $G$ is in $T_{n,m}^{(j)}$ we know that $F_{j}(G)>F_j(H)$ and further, $F_k(G)=F_k(H)$ for all $k\in \{0,1,\ldots,j-1\}$. Lemma~\ref{lemma:local}\ref{l2} gives that there exists $\delta>0$ such that $SR_{G}(p)>SR_H(p)$ for all $p\in (1-\delta,1)$. 
The lemma follows.
\end{proof}

\begin{remark}\label{remark:stop}
If there exists $i$ in $\{1,2,\ldots,m\}$ such that each pair of two-terminal graphs in $T_{n,m}^{(i)}$ are split-equivalent, then $T_{n,m}^{(i)}=\mathcal{G}_{n,m}$. 
\end{remark}

A key to characterize $\mathcal{G}_{n,m}$ is the concept of balloon graph given in Definition~\ref{def:balloon}. 
Let $I=\{(n,m)\in \mathbb{Z}^2: n\geq 4, \, n \leq m \leq \binom{n}{2}\}$. 
Observe that the set $I$ consists of all pairs $(n,m)$ such that $T_{n,m}$ is nonempty, with the exception of the pair $(3,3)$ and all pairs $(n,m)$ such that $m=n-1$ and $n\geq 2$, which 
were already covered by Brown and McMullin~\cite{2023-Brown}. In fact, it is clear that $T_{3,3}$ has a single element which is $K_3$ equipped with two-terminals thus $\mathcal{G}_{3,3}=\{K_{3,3}\}$, and the authors in~\cite{2023-Brown} proved that, for each $n$ such that $n\geq 2$,  the $n$-path $P_n$ equipped with its endpoints as terminals is the only uniformly split reliable graph in $T_{n,n-1}$ thus $T_{n,n-1}=\{P_n\}$. In the following we will study, without loss of generality, the classes $T_{n,m}$ such that $(n,m)$ belongs to the set $I$. 

Consider the partition $\{I_0,I_1\}$ of $I$  such that $I_1=I-I_0$ and $I_0$ is defined as follows,
\begin{equation*}
I_0 = \left\{(n,m)\in \mathbb{Z}^2: n\geq 4, \, \, \binom{n-1}{2}+2 \leq m \leq \binom{n}{2}\right\}.    
\end{equation*}

\begin{definition}\label{def:balloon}
For each $(n,m)$ in $I$ \emph{the balloon graph} $B_{n,m}$ in $\mathcal{C}_{n,m}$ is defined recursively as follows. 
\begin{itemize}
    \item If $(n,m) \in I_0$ then $B_{n,m}$ consists of $K_{n-1}$ plus a vertex $v$ connected to $m-\binom{n-1}{2}$ vertices in $K_{n-1}$. 
    \item If $(n,m)=(4,4)$ then $B_{n,m}$ consists of $K_3$ plus a vertex hanging to it. 
    \item Otherwise, $B_{n,m}$ is $B_{n-1,m-1}$ plus a vertex $v$ hanging to some minimum degree vertex in $B_{n-1,m-1}$.
\end{itemize}
\end{definition}
The graph $B_{9,15}$ is depicted in Figure~\ref{fig:GyH}. Balloon graphs were introduced in~\cite{1996-Petingi} for pairs $(n,m)$ in $I_0$. Definition~\ref{def:balloon} extends this concept for all pairs in $I$. 

Let $G$ be any connected simple graph. For each pair of vertices $u$ and $v$ in $G$, we denote $d_G(u,v)$ 
the distance between $u$ and $v$ in $G$. The diameter of $G$ is the maximum distance among all pairs of vertices in $G$. 

\begin{definition}
For each $(n,m)$ in $I$, the \emph{two-terminal balloon graph} $G_{n,m}$ is the balloon graph $B_{n,m}$ equipped with two terminals $s$ and $t$ whose distance equals the diameter of $B_{n,m}$. 
\end{definition}

It is simple to check that the two-terminal balloon graph $G_{n,m}$ is unique up to isomorphism. 

Theorem~\ref{theorem:local1} gives a characterization of locally most split reliable graphs.

\begin{theorem}\label{theorem:local1}
For each pair of integers $(n,m)$ in $I$, the locally most split reliable graphs $\mathcal{G}_{n,m}$ in $T_{n,m}$ is the set consisting of all two-terminal graphs in $T_{n,m}$ that are equivalent to the two-terminal balloon graph $G_{n,m}$, i.e.,
\begin{equation*}
\mathcal{G}_{n,m}=[G_{n,m}]=\{H:H\in T_{n,m}, \, \, F(H)=F(G_{n,m})\}.    
\end{equation*}
\end{theorem}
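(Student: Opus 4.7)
The plan is to apply the framework established by Lemma~\ref{lemma:construction} and Remark~\ref{remark:stop}: it suffices to exhibit some step $i^* \leq m$ at which every graph surviving in $T_{n,m}^{(i^*)}$ is split-equivalent to $G_{n,m}$, and to verify that $G_{n,m}$ itself is never eliminated during the iterative narrowing. The whole argument rests on the combinatorial interpretation $F_i(G) = N_{m-i}(G)$, which counts the $i$-edge subsets $E' \subseteq E(G)$ whose removal leaves exactly two connected components, one containing $s$ and one containing $t$. Since $G$ is connected, $F_0(G)=0$ for every $G \in T_{n,m}$, and $F_1(G)$ equals the number of $(s,t)$-bridges of $G$, that is, the length of the maximal path of bridges separating $s$ from $t$.

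First, I would use the recursive structure of the balloon, together with the properties developed in Section~\ref{section:properties}, to derive closed-form expressions (or at least sharp bounds) for the leading entries of $F(G_{n,m})$. The balloon $B_{n,m}$ decomposes as a pendant tail attached to a dense cluster with the terminals at opposite ends of the diameter; the split subgraphs obtained after removing few edges are essentially parameterized by a choice of which tail edge to break, plus a local perturbation inside the dense block, which makes the coefficients $F_1,F_2,\ldots$ tractable and easy to compare with those of an arbitrary graph.

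Second, for an arbitrary $G \in T_{n,m}$, I would establish a sequence of upper-bound statements, each followed by an equality characterization, mirroring the iterative narrowing of $T_{n,m}^{(i)}$. Namely, $F_1(G)$ is at most the maximum length of an $(s,t)$-bridge path achievable in $T_{n,m}$, this maximum being attained by $G_{n,m}$, and equality forces $G$ to consist of a bridge path of the right length with a single $2$-edge-connected block attached at the non-tail end. Then $F_2(G)$ is bounded by a quantity computed from this block's size, with equality forcing further structural agreement, and so on. Each step removes from $T_{n,m}^{(i)}$ the graphs whose bridge-path length, terminal placement, or block structure deviates from that of $G_{n,m}$.

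The main obstacle, I expect, is the induction inside the $2$-edge-connected block, where bridge counting no longer suffices and one must control how alternative block structures perturb the $F_i$ coefficients. Here the argument would exploit the fine structure of the balloon's dense part (for $(n,m)\in I_0$ a complete graph with a few edges removed at a single vertex, and for $(n,m)\in I_1$ this same dense cluster together with the tail constructed in Definition~\ref{def:balloon}), together with the monotonicity and shifting properties to be developed in Section~\ref{section:properties}. If one can show that matching all the $F_i$ with those of $G_{n,m}$ forces split-equivalence of $G$ with $G_{n,m}$, then by Remark~\ref{remark:stop} the iterative procedure stabilizes at $T_{n,m}^{(i^*)} = [G_{n,m}]$, which is exactly the conclusion of Theorem~\ref{theorem:local1}.
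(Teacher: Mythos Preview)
Your high-level plan matches the paper's: iterate $T_{n,m}^{(i)}$, show $G_{n,m}$ survives each cut, and stop once all survivors are split-equivalent. But two concrete pieces are missing or mis-stated.

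First, your structural claim after maximizing $F_1$ is too strong. Achieving $F_1(G)=b(n,m)$ forces $G$ to have $b(n,m)$ bridges, each an $s$-$t$ cut (Lemma~\ref{lemma:starting}), and hence forces the skeleton $G'$ to lie in $\mathcal{B}_0$ (Proposition~\ref{proposition:skeleton}). It does \emph{not} force the bridges to form a single pendant path with the block at one end: the bridges may be split between the $s$-side and the $t$-side of the block, provided each separates $s$ from $t$. The paper never needs the rigid pendant-path picture; instead it works with the skeleton $G'$ equipped with projected terminals $s',t'$ (Definition~\ref{definition:projected}) and uses the factorization~\eqref{eq:splitreliability} to show that $SR_G$ depends only on $(G',s',t')$ and the bridge count. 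That factorization, not a structural rigidity statement, is what ultimately delivers split-equivalence.

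Second, your outline does not identify where the narrowing actually bites. The paper shows that for $i=2,\ldots,\lambda'-1$ (with $\lambda'=\lambda(n',m')$) the coefficients $F_i$ are constant on $T_{n,m}^{(1)}$ (Lemma~\ref{lemma:intermediate}\ref{i3}), so nothing happens until step $\lambda'$. At that step, Proposition~\ref{proposition:lambda} is the crux: among all skeletons in $\mathcal{B}_0$ on $n'$ vertices and $m'$ edges, only $B_{n',m'}$ achieves edge connectivity $\lambda'$, so maximizing $F_{\lambda'}$ pins down the skeleton uniquely and forces one projected terminal to be its minimum-degree vertex (Lemma~\ref{lemma:maxlambda}). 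One further step to $i=n'-2$ forces $s'$ and $t'$ to be nonadjacent (Lemma~\ref{lemma:maxn}), after which~\eqref{eq:splitreliability} gives split-equivalence outright. Your proposed ``induction inside the block'' is not what is needed; the argument hinges on the uniqueness of the minimum-edge-connectivity graph in $\mathcal{B}_0$ and on the factorization~\eqref{eq:splitreliability}, neither of which your plan invokes.
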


In Section~\ref{section:main} we will construct two-terminal graphs $H_{n,m}$ satisfying the conditions of Proposition~\ref{proposition:near0}.
\begin{proposition}\label{proposition:near0}
For each pair of integers $n$ and $m$ such that $n\geq 7$ and $n\leq m \leq \binom{n-3}{2}+3$ there exists a two-terminal graph $H_{n,m}$ in $T_{n,m}$ such that $N_{n-2}(H_{n,m})>N_{n-2}(G_{n,m})$.
\end{proposition}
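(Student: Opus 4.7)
The plan is to exploit the classical identity $N_{n-2}(G)=\tau(G)\,R_{G}(s,t)$, where $\tau(G)$ is the number of spanning trees of $G$ and $R_{G}(s,t)$ is the effective resistance between $s$ and $t$ when each edge of $G$ has unit resistance; this identity is the special case of the all-minors matrix-tree theorem obtained by deleting the rows and columns of $s$ and $t$ from the Laplacian of $G$. It converts $N_{n-2}$ into an algebraic invariant well behaved under block decomposition, reducing the proposition to exhibiting, for each $(n,m)$ in the asserted range, a competitor $H_{n,m}$ whose product $\tau\cdot R$ strictly exceeds that of $G_{n,m}$.

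I would first unravel Definition~\ref{def:balloon} to write $G_{n,m}$ as a dense core $D$ on $k$ vertices with $m-n+k$ edges attached, via a pendant path $a=p_{0},p_{1},\dots,p_{\ell}=s$ of length $\ell=n-k\geq 3$, at a minimum-degree vertex $a$ of $D$, with $t$ realising the eccentricity of $a$ in $D$. Since the tail is a chain of bridges, block multiplicativity of $\tau$ and series additivity of $R$ give $N_{n-2}(G_{n,m})=\tau(D)\,(\ell+R_{D}(a,t))$. The main construction of $H_{n,m}$ is a local surgery on $G_{n,m}$: delete a non-bridge edge $e$ of $D$ (chosen so that $\tau(D-e)\cdot R_{D-e}(a,t)$ stays close to $\tau(D)\cdot R_{D}(a,t)$) and add the chord $\{p_{\ell-2},p_{\ell}\}$, placing $s$ inside the new triangle $\{p_{\ell-2},p_{\ell-1},p_{\ell}\}$. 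The blocks of $H_{n,m}$ are $D-e$, a chain of $\ell-2$ bridges, and the triangle, so block decomposition yields $\tau(H_{n,m})=3\,\tau(D-e)$ and $R_{H_{n,m}}(s,t)=\tfrac{2}{3}+(\ell-2)+R_{D-e}(a,t)$, whence
\[
N_{n-2}(H_{n,m})=\tau(D-e)\bigl(3\ell-4+3\,R_{D-e}(a,t)\bigr).
\]

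Comparing the two closed forms, the factor of three gained from the triangle more than compensates for the $-\tfrac{4}{3}$ lost in the resistance as soon as $\ell\geq 4$ or $\tau(D)$ is sufficiently large, which handles the bulk of the range $m\geq n+2$. The main obstacle will be the two extremal strips $m=n$ and $m=n+1$, where the dense core of $G_{n,m}$ is only $K_{3}$ or $K_{4}-e$ and the triangle surgery merely ties $G_{n,m}$. For $m=n$ I would take $H_{n,m}$ to be a $4$-cycle together with a pendant path of $n-4$ vertices ending at $s$ and with $t$ placed antipodally on the $4$-cycle, obtaining $N_{n-2}(H_{n,m})=4(n-3)>3(n-3)+2=N_{n-2}(G_{n,m})$ for $n\geq 7$; for $m=n+1$ I would take $H_{n,m}$ to have $C_{5}$ with one chord as its dense core (so $\tau$ jumps from $8$ to $11$) attached via a pendant path of $n-5$ vertices to $s$, and a direct Kirchhoff computation of $R_{H_{n,m}}(s,t)$ then gives $N_{n-2}(H_{n,m})=11n-42>8n-24=N_{n-2}(G_{n,m})$ for $n\geq 7$. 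Combining the three constructions covers every $(n,m)$ with $n\geq 7$ and $n\leq m\leq\binom{n-3}{2}+3$.
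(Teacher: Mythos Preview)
Your effective--resistance identity $N_{n-2}(G)=\tau(G)\,R_G(s,t)$ is correct and gives a genuinely different viewpoint from the paper, which never invokes resistances but instead performs the surgery ``contract a bridge, \emph{subdivide} an edge of the core'' and compares $N_{n-2}$ via the deletion--contraction relation $t(H')=t(G'-e)+t(G')$ together with Bogdanowicz's closed formula for $t$ of threshold graphs. Your two ad hoc constructions for $m=n$ and $m=n+1$ are fine (the placements $a=4$, $t=2$ on $C_5$ with chord $13$ indeed give $R=13/11$).

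The substantive gap is in the ``bulk'' case $m\ge n+2$. You assert that
\[
\tau(D-e)\bigl(3\ell-4+3R_{D-e}(a,t)\bigr)\;>\;\tau(D)\bigl(\ell+R_D(a,t)\bigr)
\]
holds ``as soon as $\ell\ge4$ or $\tau(D)$ is sufficiently large'', with $e$ ``chosen so that $\tau(D-e)R_{D-e}(a,t)$ stays close to $\tau(D)R_D(a,t)$''. Neither clause is justified, and both can fail. What actually governs the inequality is the ratio $\tau(D-e)/\tau(D)$, not the size of $\tau(D)$; and $\ell=3$ does occur throughout the top strip $\binom{n-4}{2}+5\le m\le\binom{n-3}{2}+3$. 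Concretely, take $n=7$, $m=9$: here $\ell=3$, $D=K_4$, $\tau(D)=16$, $R_D(a,t)=\tfrac12$, so $N_{n-2}(G_{7,9})=56$. If you delete the edge $e$ of $K_4$ \emph{not} meeting $\{a,t\}$ you get $\tau(D-e)=8$, $R_{D-e}(a,t)=\tfrac12$, hence $N_{n-2}(H)=8(9-4+\tfrac32)=52<56$; only the choice $e=at$ (giving $R_{D-e}=1$) yields $64>56$. For larger cores with $\lambda'<n'-1$ the edge $at$ is not even present in $D$, so the ``obvious'' fix is unavailable and you must identify a different edge and prove the ratio bound.

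This is precisely where the paper spends its effort: it specifies, case by case, an edge $e$ of the core (incident to the minimum--degree vertex, or joining two non--neighbours of it, or the unique remaining case $K_4-e$) and uses Bogdanowicz's formula to prove $t(G'-e)\ge t(G')/2$ in each case; combined with $b(n,m)\ge3$ this gives the strict inequality. Your outline would need an equally explicit choice of $e$ together with a proof that $\tau(D-e)/\tau(D)$ is bounded below (by $\tfrac12$, or at least by something strictly larger than $\tfrac13$ with enough slack in the constant term) uniformly over all cores $D=B_{n',m'}$ arising in the range; without that, the argument for $m\ge n+2$ is incomplete.
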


A consequence of Theorem~\ref{theorem:local1} and Proposition~\ref{proposition:near0} is the following theorem.
\begin{theorem}\label{theorem:nonexistence}
For each pair of integers $n$ and $m$ such that $n\geq 7$ and $n\leq m \leq \binom{n-3}{2}+3$ there is no uniformly most split reliable graph in $T_{n,m}$.    
\end{theorem}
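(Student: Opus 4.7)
The plan is to deduce Theorem~\ref{theorem:nonexistence} at once from Theorem~\ref{theorem:local1} and Proposition~\ref{proposition:near0} by a ``squeeze'' argument that plays the behavior of $SR_G(p)$ near $p=1$ against its behavior near $p=0$. I would argue by contradiction: suppose that some $G^{*}\in T_{n,m}$ is uniformly most split reliable. Then in particular $G^{*}$ is locally most split reliable (take any $\delta\in(0,1)$ in the definition), so Theorem~\ref{theorem:local1} forces $G^{*}\in[G_{n,m}]$, i.e.\ $F(G^{*})=F(G_{n,m})$ and hence $SR_{G^{*}}(p)\equiv SR_{G_{n,m}}(p)$ on $[0,1]$.

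Next I would apply Proposition~\ref{proposition:near0} to produce a graph $H_{n,m}\in T_{n,m}$ satisfying $N_{n-2}(H_{n,m})>N_{n-2}(G_{n,m})$. To feed this into Lemma~\ref{lemma:local}\ref{l1} I need the simple observation that for every $G\in T_{n,m}$ one has $N_{k}(G)=0$ whenever $k<n-2$: any split subgraph is a spanning subgraph with two connected components covering $n$ vertices, so it contains at least $(n_1-1)+(n_2-1)=n-2$ edges, where $n_1,n_2$ are the sizes of its two components. Hence $N_{k}(H_{n,m})=N_{k}(G_{n,m})=0$ for all $k<n-2$, while $N_{n-2}(H_{n,m})>N_{n-2}(G_{n,m})$. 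Lemma~\ref{lemma:local}\ref{l1}, applied with $i=n-2$, then yields $\delta>0$ such that
\begin{equation*}
SR_{H_{n,m}}(p) > SR_{G_{n,m}}(p) = SR_{G^{*}}(p) \qquad \text{for every } p\in(0,\delta).
\end{equation*}
This contradicts the assumed uniform optimality of $G^{*}$, and the theorem follows.

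There is no serious obstacle in this derivation itself; the entire difficulty of Theorem~\ref{theorem:nonexistence} has been pushed into the two inputs. The only conceptual step is to realize that the range of $(n,m)$ in the hypothesis is precisely the range for which one can still augment the number of minimal split subgraphs strictly above that of $G_{n,m}$ (the content of Proposition~\ref{proposition:near0}), so that Theorem~\ref{theorem:local1} (which pins any candidate uniform optimum to $[G_{n,m}]$ at the $p\to 1^{-}$ end) and the constructed $H_{n,m}$ (which strictly beats $G_{n,m}$ at the $p\to 0^{+}$ end) are genuinely incompatible. I would present the proof in exactly this two-paragraph form, immediately after stating the supporting proposition, since all the technical work has already been done.
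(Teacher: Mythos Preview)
Your proof is correct and follows essentially the same route as the paper: assume a uniform optimum $G^{*}$, use Theorem~\ref{theorem:local1} to force $G^{*}\in[G_{n,m}]$, then invoke Proposition~\ref{proposition:near0} together with Lemma~\ref{lemma:local}\ref{l1} at $i=n-2$ to obtain a contradiction near $p=0$. Your added justification that $N_k(G)=0$ for $k<n-2$ makes explicit the hypothesis of Lemma~\ref{lemma:local}\ref{l1} that the paper leaves implicit (it is already built into the summation range of $SR_G(p)$), but otherwise the arguments coincide.
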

\begin{proof}
Assume by contradiction that $G^*$ is uniformly most split reliable in $T_{n,m}$. As $\mathcal{G}_{n,m}$ is the set of locally most split reliable graphs in $T_{n,m}$, $G^*$ belongs to $\mathcal{G}_{n,m}$. By Theorem~\ref{theorem:local1}, 
we know that $G^*$ and $G_{n,m}$ are split equivalent thus $F(G^*)=F(G_{n,m})$. 
By Proposition~\ref{proposition:near0}, there exists $H_{n,m}$ in $T_{n,m}$ such that 
$N_{n-2}(H_{n,m})>N_{n-2}(G_{n,m})$. Consequently, $N_{n-2}(H_{n,m})>N_{n-2}(G^*)$. Applying Lemma~\ref{lemma:local}\ref{l1} using $i=n-2$ we obtain that there exists $\delta>0$ such that $SR_{H_{n,m}}(p)>SR_{G^*}(p)$ for all $p\in (0,\delta)$, which contradicts that $G^*$ is uniformly most split reliable. Then, there is no uniformly most split reliable graph in $T_{n,m}$ and the theorem follows. 
\end{proof}

In Section~\ref{section:properties} we will present remarkable properties of balloon graphs that will be used in the proofs of Theorem~\ref{theorem:local1} and Proposition~\ref{proposition:near0}. Then, a proof of Theorem~\ref{theorem:local1} is presented in Section~\ref{section:class}. Finally, a proof of Proposition~\ref{proposition:near0} is presented in Section~\ref{section:main}. 

\section{Properties of balloon graphs}\label{section:properties}
In this section we will characterize all graphs maximizing the number of bridges, minimizing the edge connectivity, and minimizing the number of spanning trees, among all connected simple graphs on $n$ vertices and $m$ edges. Surprisingly, the balloon graph is a simultaneous solution to the three combinatorial optimization problems. This fact is essential to prove Theorem~\ref{theorem:local1}. Finally, we include some results credited to Bogdanowicz~\cite{2009-Bogdanowicz} related to the minimization of spanning trees among all connected graphs on $n$ vertices and $m$ edges which will be useful to prove Proposition~\ref{proposition:near0}.  

Some concepts on graph theory are revisited in Subsection~\ref{subsection:concepts}. 
Subsection~\ref{subsection:3cops} presents the previous combinatorial optimization problems together with their corresponding solutions. A new characterization of graphs having the maximum number of bridges is also included.

\subsection{Concepts}\label{subsection:concepts}
Let $\mathcal{C}_{n,m}$ be the set of connected simple graphs on $n$ vertices and $m$ edges. Let $G$ be any graph in $\mathcal{C}_{n,m}$. The \emph{minimum degree of $G$} is denoted $\delta(G)$.   
A \emph{bridge in $G$} is an edge $e$ in $G$ such that $G-e$ is not connected. The number of bridges in $G$ is denoted $b(G)$. 
The \emph{subdivision of an edge $e$ in $G$} is the graph obtained by replacing the edge $e$ with endpoints $v$ and $w$ by two edges $vz$ and $zw$  where $z\notin V(G)$. The \emph{contraction of an edge $e$ in $G$} is a new graph, denoted $G*e$, that is obtained by the deletion of the edge $e$ followed by the identification of the endpoints of $e$. Observe that, if $e$ is a bridge, then $G*e \in \mathcal{C}_{n-1,m-1}$. The \emph{skeleton} of $G$, here denoted $G'$, is the graph that arises from $G$ by the contraction of each of its bridges. 

The $n$-path, the $n$-cycle, and the $n$-complete graph are denoted $P_n$, $C_n$, and $K_n$, respectively. If $G$ is not $K_1$ then its \emph{edge connectivity}, denoted  $\lambda(G)$, is the minimum number of edges that must be removed from $G$ to obtain a subgraph that is not connected. 
A \emph{minimum separator} of $G$ is an edge set $U$ of $G$ having precisely $\lambda(G)$ edges 
such that $G-U$ is not connected. The number of minimum separators of $G$ is denoted $s(G)$. 
A \emph{tree} is a connected simple graph without cycles. The number of spanning trees of a simple graph $G$ is denoted $t(G)$.

\subsection{Three combinatorial optimization problems}\label{subsection:3cops}
Let us consider, for each pair of integers $(n,m)$ in $I$, the following combinatorial optimization problems,
\begin{align*}
b(n,m) &= \max\{b(G): G\in \mathcal{C}_{n,m}\},\\
\lambda(n,m) &= \min\{\lambda(G): G\in \mathcal{C}_{n,m}\},\\
t(n,m) &= \min\{t(G): G\in \mathcal{C}_{n,m}\}.
\end{align*}

Surprisingly, we will see that for each pair $(n,m)$ in $I$ the balloon graph $B_{n,m}$ is a simultaneous solution to the three problems, i.e., 
$b(B_{n,m})=b(n,m)$, $\lambda(B_{n,m})=\lambda(n,m)$, and $t(B_{n,m})=t(n,m)$. 
In~\cite{2022-Romero} it is proved that for each $(n,m)$ in $I$ it holds that  $b(n,m)=n-1-\lceil \sqrt{2(m-n+3)}-\frac{1}{2} \rceil$ and further, that $B_{n,m}$ has precisely $b(n,m)$ bridges.
\begin{proposition}[Romero~\cite{2022-Romero}]\label{proposition:balloon}
For each $(n,m)$ in $I$, $b(B_{n,m})=b(n,m)$.    
\end{proposition}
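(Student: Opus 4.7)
The plan is to combine a structural upper bound on $b(G)$ that holds for every $G \in \mathcal{C}_{n,m}$ with a direct recursive computation showing that $B_{n,m}$ attains this bound.

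For the upper bound, fix $G \in \mathcal{C}_{n,m}$ and set $b = b(G)$. Contracting every bridge of $G$ produces a graph $G'$ that is (i) connected, since contractions preserve connectivity; (ii) simple, because a bridge lies on no cycle and hence its contraction cannot create a multi-edge; and (iii) bridgeless, since every surviving edge lies on a cycle of $G$. Thus $G'$ is a simple connected bridgeless graph on $n - b$ vertices and $m - b$ edges. Since $m \geq n$ rules out $G' \cong K_1$, we have $n - b \geq 3$, and simplicity forces
\[
m - b \;\leq\; \binom{n-b}{2}.
\]
Rearranging, the minimum integer $k \geq 3$ for which $\binom{k}{2} \geq m - n + k$ is $k^* := \lceil (3 + \sqrt{9 + 8(m-n)})/2 \rceil$, so $b \leq n - k^*$.

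For the matching lower bound, I would compute $b(B_{n,m})$ by induction on the recursive definition. When $(n, m) \in I_0$, the vertex $v$ adjoined to $K_{n-1}$ has degree $m - \binom{n-1}{2} \geq 2$, so every edge of $B_{n,m}$ lies on a cycle and $b(B_{n,m}) = 0$. When $(n, m) = (4, 4)$, only the pendant edge is a bridge, so $b(B_{4,4}) = 1$. In all remaining cases, attaching a pendant vertex to $B_{n-1, m-1}$ creates exactly one new bridge and preserves the previous ones, so $b(B_{n,m}) = b(B_{n-1,m-1}) + 1$. Unrolling yields $b(B_{n,m}) = n - n_0$ (with a $+1$ correction when the recursion terminates at $(4,4)$), where $(n_0, m_0)$ is the first pair of the form $(n-k, m-k)$ to enter $I_0 \cup \{(4,4)\}$.

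The main technical obstacle is verifying that this recursive count equals the bound $n - k^*$. The quantity $m - n$ is invariant under the step $(n, m) \mapsto (n-1, m-1)$, so the value of $k^*$ is the same at every level of the recursion. When $k^* \geq 4$, the minimality of $k^*$ yields $\binom{k^* - 1}{2} + 2 \leq m - n + k^* \leq \binom{k^*}{2}$, which places $(k^*, m - n + k^*)$ in $I_0$ and so the recursion halts with exactly $n - k^*$ bridges. When $k^* = 3$, which corresponds to $m = n$, the recursion instead runs down to the base case $(4, 4)$, yielding $(n - 4) + 1 = n - 3 = n - k^*$ bridges. Either way, $b(B_{n,m}) = n - k^*$, matching the upper bound and completing the proof.
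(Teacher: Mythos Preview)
Your argument is correct. Note that the paper does not actually prove Proposition~\ref{proposition:balloon}; it is quoted from~\cite{2022-Romero} and invoked as a black box, so there is no ``paper's own proof'' to compare against.

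Your self-contained route---bound $b(G)$ for arbitrary $G\in\mathcal{C}_{n,m}$ via the inequality $m-b(G)\le\binom{n-b(G)}{2}$ forced by the simple bridgeless skeleton, then trace the recursive definition of $B_{n,m}$ to verify it attains the resulting bound $n-k^*$---is clean and complete. One small point worth making explicit in a polished write-up: for each $k$ strictly between $k^*$ and $n$ the pair $(k,\,m-n+k)$ lies in $I_1\setminus\{(4,4)\}$ (this follows from the minimality of $k^*$, together with $m>n$ whenever $k^*\ge 4$), so the recursion genuinely descends all the way to level $k^*$ before terminating. You have the right idea there; it just deserves one sentence of justification.
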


Here we characterize all graphs in $\mathcal{C}_{n,m}$ having precisely $b(n,m)$ bridges in terms of its skeleton. Let $\mathcal{B}_0$ be the set of all connected simple graphs $G$ in $\mathcal{C}_{n,m}$ for some pair $(n,m)$ in $I_0$, i.e.,
\begin{equation*}
\mathcal{B}_0 = \{G: G\in \mathcal{C}_{n,m}, \, (n,m)\in I_0\}.    
\end{equation*}

\begin{proposition}\label{proposition:skeleton}
Let $(n,m)$ be in $I$. 
A graph $G$ in $\mathcal{C}_{n,m}$ has $b(n,m)$ bridges if and only if $G' \in \mathcal{B}_0$. 
\end{proposition}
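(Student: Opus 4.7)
The plan is to translate the extremal condition ``$G$ has $b(n,m)$ bridges'' into a combinatorial condition on the skeleton $G'$. I would first record the basic bookkeeping: because the two endpoints of a bridge $e$ in a simple graph lie in different components of $G-e$ and therefore share no common neighbour, contracting bridges one at a time preserves simplicity. Hence $G'$ is a simple, connected, bridgeless graph with $|V(G')|=n-b(G)$ and $|E(G')|=m-b(G)$. Conversely, every bridgeless simple connected graph on $n'$ vertices and $m'$ edges with $n'+b=n$ and $m'+b=m$ arises as the skeleton of some $G\in\mathcal{C}_{n,m}$ obtained by attaching a path of $b$ pendant edges at any vertex. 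Thus maximising $b(G)$ is the same as minimising $n'=|V(G')|$ subject to the existence of a bridgeless simple connected graph with $n'$ vertices and $m'=m-n+n'$ edges.

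Next I would dispose of that existence question by the standard fact that, for $n'\geq 3$, a bridgeless simple connected graph with $n'$ vertices and $m'$ edges exists if and only if $n'\leq m'\leq \binom{n'}{2}$ (take $C_{n'}$ and add chords). With $m'=m-n+n'$, the lower bound is automatic because $m\geq n$, while the upper bound rearranges to $n'(n'-3)\geq 2(m-n)$, equivalently $n'\geq(3+\sqrt{8m-8n+9})/2$. Let $n'^{*}$ be the smallest integer meeting this bound; then the maximum attainable number of bridges is $n-n'^{*}$, and an elementary algebraic check shows this equals the closed form $b(n,m)=n-1-\lceil\sqrt{2(m-n+3)}-\tfrac{1}{2}\rceil$ guaranteed by Proposition~\ref{proposition:balloon}.

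The equivalence now follows from the boundary behaviour of this minimisation. For the forward implication, $b(G)=b(n,m)$ forces $|V(G')|=n'^{*}$; by definition of $n'^{*}$, the value $n'^{*}-1$ violates the upper bound, giving $m-n+(n'^{*}-1)>\binom{n'^{*}-1}{2}$, i.e., $m'\geq\binom{n'^{*}-1}{2}+2$. Together with $m'\leq\binom{n'^{*}}{2}$ this is exactly $(n'^{*},m')\in I_0$, so $G'\in\mathcal{B}_0$. For the reverse implication, $G'\in\mathcal{B}_0$ means $(n',m')\in I_0$, and then $m'\geq\binom{n'-1}{2}+2$ precludes any smaller valid choice, forcing $n'=n'^{*}$ and hence $b(G)=b(n,m)$. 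The principal obstacle I expect is the clean algebraic verification that Romero's ceiling formula and the threshold $(3+\sqrt{8m-8n+9})/2$ really give the same integer $n'^{*}$; a minor wrinkle is the marginal regime $n'^{*}=3$, which occurs exactly when $m=n$ and forces $G'=K_3$, and where one must either enlarge the convention defining $\mathcal{B}_0$ or treat this sliver of $I$ as a separate base case.
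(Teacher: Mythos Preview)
Your proof is correct and takes a genuinely different route from the paper's. The paper argues both directions by invoking Proposition~\ref{proposition:balloon} (the balloon graph realises $b(n,m)$) together with the recursive structure of $B_{n,m}$: it shows that if $G'\notin\mathcal{B}_0$ then $B_{n',m'}$ already has a bridge, and uses this to push $b(B_{n,m})$ strictly above $b(n,m)$, a contradiction. You instead work from first principles: you identify the image of the skeleton map as the bridgeless simple connected graphs on $(n',m')$ with $m'-n'=m-n$, reduce ``maximise $b(G)$'' to ``minimise $n'$ subject to $n'\le m'\le\binom{n'}{2}$'', and read off that the optimum $n'^{*}$ is characterised precisely by the inequalities defining $I_0$. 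The paper's argument is shorter because it outsources the extremal computation to Proposition~\ref{proposition:balloon}; yours is self-contained and in fact re-derives the value of $b(n,m)$ along the way.

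Two small remarks. First, the ``principal obstacle'' you anticipate---matching your $n'^{*}$ to the closed-form ceiling expression---is not actually needed: your argument already shows $b(n,m)=n-n'^{*}$ directly from the definition $b(n,m)=\max_{G}b(G)$, so the equivalence with $I_0$ follows without ever touching the closed form. Second, your observation about $n'^{*}=3$ (equivalently $m=n$, forcing $G'\cong K_3$) is well taken: since $I_0$ is defined with $n\ge 4$, the pair $(3,3)$ is formally excluded from $I_0$ and hence $K_3\notin\mathcal{B}_0$. This boundary glitch is present in the paper's formulation as well (its proof appeals to $B_{n',m'}$, which is only defined for $n'\ge 4$), so it is not a defect of your approach but of the statement as written; treating $m=n$ separately or tacitly including $(3,3)$ in $I_0$ resolves it.
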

\begin{proof}
Let $(n,m)$ be in $I$. First, consider a graph $G$ in $\mathcal{C}_{n,m}$ having precisely $b(n,m)$ bridges. 
Assume by contradiction that $G'\notin \mathcal{B}_0$. Then, $G'$ is a graph in $\mathcal{C}_{n',m'}$ for some pair $(n',m')$ that is not in $I_0$. As $G'$ is the skeleton of $G$ and $b(G)=b(n,m)$ we get that $m=m'+b(n,m)$.

As $(n',m')\notin I_0$, the balloon graph $B_{n',m'}$ has at least some hanging vertex thus 
$b(B_{n',m'})\geq 1$. Observe that $B_{n,m}$ is obtained from $B_{n',m'}$ by the addition of $m-m'$ bridges. By Proposition~\ref{proposition:balloon},  we get that $b(n,m)=b(B_{n,m})=m-m'+b(B_{n',m'})>m-m'=b(n,m)$ which is impossible thus $G' \in \mathcal{B}_0$. 

Finally, consider any graph $G$ in $\mathcal{C}_{n,m}$ such that $G' \in \mathcal{B}_0$. Let $n'$ and $m'$ be the number of vertices and edges in $G'$.  
By the definition of skeleton, $m=m'+b(G)$. 
Observe that we must contract precisely $m-m'$ bridges from $B_{n,m}$ to obtain the balloon graph $B_{n',m'}$ in $\mathcal{B}_0$ and $b(B_{n',m'})=0$. 
By Proposition~\ref{proposition:balloon} we get that 
$b(n,m)=b(B_{n,m})=m-m'=b(G)$, and the proposition follows.
\end{proof}

Now, let us determine $\lambda(n,m)$ for each pair $(n,m)$ in $I$.
The following technical lemma will be useful.
\begin{lemma}\label{lemma:technical}
Let $n$ be an integer such that $n\geq 4$. The function $f:\{2,3,\ldots,n-2\}\to \mathbb{R}$ defined by $f(a)=\binom{a}{2}+\binom{n-a}{2}$ satisfies that $f(a)\leq \binom{n-2}{2}+1$.    
\end{lemma}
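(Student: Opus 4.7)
The plan is to exploit that $f(a) = \binom{a}{2} + \binom{n-a}{2}$ is a symmetric convex function of $a$, so its maximum over the integer interval $\{2,3,\ldots,n-2\}$ must occur at an endpoint. By symmetry $f(a)=f(n-a)$, the two endpoints give the same value, so it suffices to evaluate $f(2) = \binom{2}{2}+\binom{n-2}{2}= 1+\binom{n-2}{2}$, which is precisely the claimed upper bound.

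To justify the convexity rigorously I would compute the discrete second difference: using $\binom{a}{2}=a(a-1)/2$, the map $a\mapsto \binom{a}{2}$ has second difference equal to $1$, and similarly for $a\mapsto\binom{n-a}{2}$, so $f(a+1)-2f(a)+f(a-1)=2>0$ for every interior $a$. Alternatively, one may just expand $f(a)=a^2-na+\binom{n}{2}$, a quadratic in $a$ with positive leading coefficient and minimum at $a=n/2$. Either way, on any integer interval $f$ is maximized at whichever endpoint lies farther from $n/2$; by symmetry both endpoints $a=2$ and $a=n-2$ of $\{2,\ldots,n-2\}$ are equidistant from $n/2$, so they achieve the joint maximum.

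Finally, I would close the argument by combining the two observations: for every $a\in\{2,\ldots,n-2\}$ one has $f(a)\le \max\{f(2),f(n-2)\} = f(2) = 1+\binom{n-2}{2}$, which is exactly the desired inequality.

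I do not expect a real obstacle here; the only thing to be slightly careful about is that convexity is being used on the discrete domain $\{2,\ldots,n-2\}$, but since $f$ extends to a genuine quadratic polynomial on $\mathbb{R}$ (or since the second difference is a positive constant), convexity transfers immediately and the endpoint-maximum argument is valid without any case analysis on the parity of $n$.
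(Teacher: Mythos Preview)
Your proposal is correct and essentially matches the paper's proof: both exploit the symmetry $f(a)=f(n-a)$ and the fact that $f$ is a quadratic in $a$ with positive leading coefficient to conclude the maximum occurs at the endpoint $a=n-2$ (equivalently $a=2$). The only cosmetic difference is that the paper phrases it via the first difference $f(a+1)-f(a)=2a+1-n$ to show $f$ is increasing for $a\ge n/2$, whereas you invoke convexity (second difference or the explicit quadratic) to push the maximum to the boundary; the content is the same.
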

\begin{proof}
It is clear that, for each $a\in \{2,3,\ldots,n-2\}$, $f(a)=f(n-a)$. Consequently, 
the maximum of $f$ is achieved in some integer $a$ such that $a\geq n/2$. Additionally, 
\begin{equation*}
f(a+1)-f(a)=\binom{a+1}{2}+\binom{n-a-1}{2}-\binom{a}{2}-\binom{n-a}{2} = 2a+1-n,    
\end{equation*}
thus $f$ is increasing when $a\geq n/2$ and $f(a)\leq f(n-2)=\binom{n-2}{2}+1$.
\end{proof}

Let us find $\lambda(G)$ for each $G$ in $\mathcal{B}_0$. 
\begin{lemma}\label{lemma:mindegree}
Each graph $G$ in $\mathcal{B}_0$ satisfies that $\lambda(G)=\delta(G)$. 
\end{lemma}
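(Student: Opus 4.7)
The plan is to prove Lemma~\ref{lemma:mindegree} by contradiction, leveraging the fact that for graphs in $\mathcal{B}_0$ the edge count $m \geq \binom{n-1}{2}+2$ is so large that any small edge cut is incompatible with the structural constraints on the two resulting components.

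First I would recall the universal inequality $\lambda(G) \leq \delta(G)$, which always holds. So it suffices to assume, for the sake of contradiction, that $\lambda(G) < \delta(G)$ for some $G$ in $\mathcal{B}_0 \cap \mathcal{C}_{n,m}$, and derive a contradiction. Let $U$ be a minimum edge cut of $G$, so $|U| = \lambda(G)$, and let $\{A, B\}$ be the corresponding bipartition of $V(G)$ with $|A| = a$, $|B| = n - a$. If $a = 1$ and $A = \{v\}$, then every edge incident to $v$ lies in $U$, forcing $\lambda(G) = |U| \geq \deg(v) \geq \delta(G)$, a contradiction; the same holds if $a = n-1$. Thus we may assume $2 \leq a \leq n-2$.

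Next I would estimate the total number of edges. The edges of $G$ split into those inside $A$, those inside $B$, and those in the cut $U$. Since $G$ is simple,
\begin{equation*}
m \;\leq\; \binom{a}{2} + \binom{n-a}{2} + |U| \;=\; \binom{a}{2} + \binom{n-a}{2} + \lambda(G).
\end{equation*}
Applying Lemma~\ref{lemma:technical} to the first two terms gives $\binom{a}{2} + \binom{n-a}{2} \leq \binom{n-2}{2} + 1$. Combining this with the assumption $\lambda(G) \leq \delta(G) - 1$ and the trivial bound $\delta(G) \leq n-1$, I obtain
\begin{equation*}
m \;\leq\; \binom{n-2}{2} + 1 + \lambda(G) \;\leq\; \binom{n-2}{2} + \delta(G) \;\leq\; \binom{n-2}{2} + n - 1.
\end{equation*}

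Finally, since $G \in \mathcal{B}_0$, we have $(n,m) \in I_0$, so $m \geq \binom{n-1}{2} + 2$. A direct calculation using $\binom{n-1}{2} - \binom{n-2}{2} = n-2$ gives $\binom{n-1}{2} + 2 = \binom{n-2}{2} + n$, which strictly exceeds $\binom{n-2}{2} + n - 1$, contradicting the upper bound derived above. Hence $\lambda(G) \geq \delta(G)$, and combined with $\lambda(G) \leq \delta(G)$ we conclude $\lambda(G) = \delta(G)$. I do not anticipate a real obstacle here: the whole argument is essentially one inequality chain, and the only subtlety is the case analysis ruling out a singleton side of the cut, which is standard.
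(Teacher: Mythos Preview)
Your proof is correct and follows essentially the same approach as the paper's: assume $\lambda(G)<\delta(G)$, rule out a singleton side of the minimum cut, bound $m$ by $\binom{a}{2}+\binom{n-a}{2}+\lambda(G)$, apply Lemma~\ref{lemma:technical}, and reach a contradiction with $m\geq \binom{n-1}{2}+2$. The only cosmetic difference is that the paper rearranges the final inequality to conclude $\lambda(G)\geq n-1$ (hence $\delta(G)>n-1$, impossible), whereas you compare the two bounds on $m$ directly; these are equivalent.
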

\begin{proof}
Let $G$ be any graph in $\mathcal{B}_0$. Clearly, $\lambda(G)\leq \delta(G)$. Assume by contradiction that $\lambda(G)<\delta(G)$. Let $U$ be some edge set in $G$ such that $|U|=\lambda(G)$ and 
$G-U=G_1\cup G_2$ for some simple graphs $G_1$ and $G_2$. 
Denote $n_1$ and $m_1$ (resp. $n_2$ and $m_2$) the number of vertices and edges of $G_1$ (resp. $G_2$). Clearly, $G-U$ has no isolated vertex thus $n_1\geq 2$ and $n_2\geq 2$. As $(n,m)\in I_0$ we know that $\binom{n-1}{2}+2\leq m$. Then, the following chain of inequalities holds,
\begin{equation*}
\binom{n-1}{2}+2\leq m = m_1+m_2+\lambda(G)\leq \binom{n_1}{2}+\binom{n-n_1}{2}+\lambda(G)\leq \binom{n-2}{2}+1+\lambda(G),    
\end{equation*}
where we used Lemma~\ref{lemma:technical} in the last inequality. This implies that
\begin{equation*}
\lambda(G)\geq \binom{n-1}{2}-\binom{n-2}{2}+1=n-1.    
\end{equation*}
As $\delta(G)>\lambda(G)$ and $\lambda(G)\geq n-1$ we would conclude that $\delta(G)>n-1$, which contradicts that $G$ is a simple graph. As a 
consequence, $\lambda(G)=\delta(G)$, and the lemma follows.
\end{proof}

We are in conditions to find a closed form for $\lambda(n,m)$.  
\begin{proposition}\label{proposition:lambda}
For each $(n,m)\in I$,  $\lambda(n,m)$ satisfies the following expression:
$$
\lambda(n,m)=
\begin{cases}
m-\binom{n-1}{2}, \text{ if } \, \, (n,m)\in I_0,\\
1, \text{ if } \, \, (n,m)\in I_1.
\end{cases}
$$
Additionally, if $(n,m)$ is any pair in $I_0$ then 
$B_{n,m}$ is the only graph in $\mathcal{C}_{n,m}$ such that $\lambda(B_{n,m})=\lambda(n,m)$. 
\end{proposition}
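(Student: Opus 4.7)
The plan is to treat the $(n,m)\in I_1$ case first (which is immediate), and then concentrate on the $(n,m)\in I_0$ case, establishing both the closed form and uniqueness simultaneously by a sharp edge-cut analysis.

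For $(n,m)\in I_1$ I would observe that, by construction, the balloon graph $B_{n,m}$ contains at least one hanging vertex (since $m<\binom{n-1}{2}+2$ forces Definition~\ref{def:balloon} to enter the recursive branch at least once). The pendant edge at that hanging vertex is a bridge, so $\lambda(B_{n,m})=1$, which proves $\lambda(n,m)\leq 1$. Since every connected graph on at least two vertices has edge connectivity at least $1$, the value $\lambda(n,m)=1$ follows.

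For $(n,m)\in I_0$, I would first derive the upper bound from the structure of $B_{n,m}$. The hanging vertex $v$ in $B_{n,m}$ has degree $m-\binom{n-1}{2}$, while every vertex of the underlying $K_{n-1}$ has degree $n-2$ or $n-1$. A short check using $2\leq m-\binom{n-1}{2}\leq n-1$ gives $\delta(B_{n,m})=m-\binom{n-1}{2}$, and Lemma~\ref{lemma:mindegree} then yields $\lambda(B_{n,m})=m-\binom{n-1}{2}$, so $\lambda(n,m)\leq m-\binom{n-1}{2}$. For the matching lower bound, pick an arbitrary $G\in\mathcal{C}_{n,m}$, a minimum edge cut $U$, and write $G-U=G_1\cup G_2$ with $|V(G_i)|=n_i$ and $|E(G_i)|=m_i$. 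Using $m=m_1+m_2+\lambda(G)$ together with $m_i\leq \binom{n_i}{2}$, I would split into two cases: if both $n_1,n_2\geq 2$, Lemma~\ref{lemma:technical} gives $m\leq \binom{n-2}{2}+1+\lambda(G)$, hence $\lambda(G)\geq m-\binom{n-2}{2}-1$, which is strictly larger than $m-\binom{n-1}{2}$ whenever $n\geq 4$; if instead $n_1=1$, the inequality $m\leq \binom{n-1}{2}+\lambda(G)$ yields exactly $\lambda(G)\geq m-\binom{n-1}{2}$.

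The uniqueness statement would then fall out of the same case analysis. Suppose $G\in\mathcal{C}_{n,m}$ satisfies $\lambda(G)=m-\binom{n-1}{2}$. Because the first case above gave a strict inequality for $n\geq 4$, any minimum edge cut $U$ must realize the second case, so $G-U$ consists of a single isolated vertex $v$ together with a subgraph on $n-1$ vertices. In that case $\lambda(G)=\deg_G(v)=m-\binom{n-1}{2}$ and the remaining component must carry $m-\lambda(G)=\binom{n-1}{2}$ edges on $n-1$ vertices, which forces it to be $K_{n-1}$. Thus $G$ is obtained from $K_{n-1}$ by attaching a new vertex via $m-\binom{n-1}{2}$ edges, i.e.\ $G\cong B_{n,m}$.

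The main obstacle is organizing the lower bound cleanly; the delicate point is verifying that the $n_1,n_2\geq 2$ case gives a strictly larger bound (ruling out any competitor to $B_{n,m}$) while the $n_1=1$ case gives the exact value, and this hinges on the inequality $\binom{n-1}{2}>\binom{n-2}{2}+1$ for $n\geq 4$. Everything else is routine structural bookkeeping using the already-established lemmas.
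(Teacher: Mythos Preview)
Your proposal is correct. The $I_1$ case matches the paper exactly. For $I_0$ you use the same ingredients (Lemma~\ref{lemma:technical} and Lemma~\ref{lemma:mindegree}) but organize them a bit differently: the paper applies Lemma~\ref{lemma:mindegree} to \emph{every} $G\in\mathcal{C}_{n,m}$, reducing the question to identifying the unique graph of minimum degree (which it dispatches with a one-line pigeonhole, ``Clearly, $B_{n,m}$ is the only graph \ldots''), whereas you invoke Lemma~\ref{lemma:mindegree} only for $B_{n,m}$ to get the upper bound and then run a direct edge-cut analysis---essentially re-proving the content of Lemma~\ref{lemma:mindegree} together with the extra $n_1=1$ case---for the lower bound and uniqueness. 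The paper's route is shorter; yours has the merit of making the uniqueness argument fully explicit rather than leaving it to the reader.
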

\begin{proof}
Let $(n,m)$ be any pair of integers in $I_0$. By Lemma~\ref{lemma:mindegree}, $\lambda(n,m)$ equals the minimum degree among all graphs in $\mathcal{C}_{n,m}$. Clearly, $B_{n,m}$ is the only graph in $\mathcal{C}_{n,m}$ with minimum degree in $\mathcal{C}_{n,m}$ 
thus $\lambda(n,m)=\lambda(B_{n,m})=m-\binom{n-1}{2}$. 
Finally, if $(n,m)\notin I_0$ then $b(B_{n,m})\geq 1$
and consequently $\lambda(n,m)=1$. The proposition follows.
\end{proof}

It is a simple exercise in combinatorics to prove that for each pair $(n,m)$ in $I_0$ 
there are precisely as many minimum separators in the balloon graph $B_{n,m}$ as vertices in 
$B_{n,m}$ with minimum degree. 
A consequence is the following remark.
\begin{remark}\label{remark:minimum}
If $(n,m)$ is in $I_0$ then the following assertions hold. 
\begin{enumerate}[label=(\roman*)]
\item If $m=\binom{n}{2}$ then $\lambda(B_{n,m})=n-1$. In this case $B_{n,m}$ is the $n$-complete graph $K_n$ and $s(B_{n,m})=n$.
\item If $m=\binom{n}{2}-1$ then $\lambda(B_{n,m})=n-2$. In this case $B_{n,m}$ is the $n$-complete graph minus an edge, and $s(B_{n,m})=2$.
\item If $m\leq \binom{n}{2}-2$ then $\lambda(B_{n,m})<n-2$. In this case $B_{n,m}$ has a single vertex with minimum degree and $s(B_{n,m})=1$.  
\end{enumerate}
\end{remark}

We close this section with two results authored by Bogdanowicz related to the determination of the number of spanning trees in balloon graphs and threshold graphs which will be essential for the proof of Theorem~\ref{theorem:nonexistence}. 

\begin{theorem}[Bogdanowicz~\cite{2009-Bogdanowicz}]\label{theorem:trees}
For each $(n,m)$ in $I$,  $t(B_{n,m})=t(n,m)$.    
\end{theorem}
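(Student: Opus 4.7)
My approach is strong induction on $n+m$, mirroring the recursive definition of the balloon graph. The base cases are the smallest pairs in $I$ (for instance $(n,m)=(4,4)$), where $\mathcal{C}_{n,m}$ is small enough to enumerate and confirm that $B_{n,m}$ achieves the minimum of $t$ directly. The inductive step splits according to the dichotomy $I = I_0 \cup I_1$ that already organizes the definition of the balloon graph.

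If $(n,m) \in I_1$, the recursive definition yields $B_{n,m} = B_{n-1,m-1}$ plus a pendant vertex, hence $t(B_{n,m}) = t(B_{n-1,m-1})$. Given $G \in \mathcal{C}_{n,m}$, I would first exploit a bridge of $G$ when one exists: if $e$ is a bridge, then $t(G) = t(G*e)$ with $G*e \in \mathcal{C}_{n-1,m-1}$, and the inductive hypothesis gives $t(G) \ge t(B_{n-1,m-1}) = t(B_{n,m})$. If $G$ is bridgeless, then $\delta(G) \ge 2$ and $m \ge n$; here I would select any edge $e$ and apply deletion-contraction $t(G) = t(G-e) + t(G*e)$, where $G-e \in \mathcal{C}_{n,m-1}$ and $G*e \in \mathcal{C}_{n-1,m-1}$ are both connected. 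The inductive hypothesis then reduces the claim to verifying the monotonicity $t(B_{n,m-1}) + t(B_{n-1,m-1}) \ge t(B_{n,m})$ over the relevant range.

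If $(n,m) \in I_0$, then $B_{n,m}$ is $K_{n-1}$ together with a vertex $v$ of degree $k = m - \binom{n-1}{2} \ge 2$, and a closed-form expression for $t(B_{n,m})$ follows from the matrix-tree theorem. To show this is minimum in $\mathcal{C}_{n,m}$, I would argue that any minimizer $G$ must contain a vertex $v$ whose degree is at most $k$ (by edge-counting and the fact that $\lambda(n,m)=k$ for $(n,m)\in I_0$, via Proposition~\ref{proposition:lambda}), and then use a compression-type edge-exchange operation on the complement of $v$'s neighborhood that provably does not increase $t$; iterating collapses $G-v$ into a copy of $K_{n-1}$, leaving precisely the balloon graph up to the placement of $v$'s $k$ neighbors, which a direct matrix-tree comparison handles.

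The main obstacle is the bridgeless subcase of $I_1$. Here $G$ has no pendant that could be contracted to preserve $t$, so the recursion does not close on its own and one must combine deletion-contraction with the monotonicity $t(B_{n,m-1}) + t(B_{n-1,m-1}) \ge t(B_{n,m})$. This monotonicity is delicate precisely at the boundary $m = \binom{n-1}{2}+2$, where $(n,m) \in I_0$ but $(n,m-1) \in I_1$, so the three balloon graphs involved lie in different structural regimes; in this transition one has to unwind the recursive identities for $t$ into explicit inequalities among polynomials in $n$ and $k$, where the closed form available in $I_0$ carries the weight of the argument.
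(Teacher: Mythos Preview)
The paper does not supply a proof of this theorem; it is quoted as a result of Bogdanowicz and left entirely to the reference~\cite{2009-Bogdanowicz}. So there is no in-paper argument to compare against, and your sketch has to stand on its own.

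There is a concrete gap in the bridgeless subcase of $I_1$. Your deletion--contraction step writes $t(G)=t(G-e)+t(G*e)$ and then invokes the inductive hypothesis on $G*e\in\mathcal{C}_{n-1,m-1}$, but $G*e$ need not be simple: if $e$ lies in a triangle, contraction creates parallel edges, and the hypothesis is stated only over the classes $\mathcal{C}_{n',m'}$ of \emph{simple} graphs. This situation does occur in $I_1$; for instance, two triangles sharing a single vertex give a bridgeless member of $\mathcal{C}_{5,6}$ (and $(5,6)\in I_1$ since $\binom{4}{2}+2=8>6$) in which every edge lies in a triangle, so no choice of $e$ avoids the problem. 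Passing to the underlying simple graph of $G*e$ drops the edge count and wrecks the bookkeeping your recursion relies on. Bogdanowicz's actual argument avoids deletion--contraction altogether: he first shows, via edge-shifting operations that never increase $t$, that a minimizer may be taken to be a threshold graph, and then uses the closed formula~\eqref{eq:bogdanowicz} to compare threshold graphs directly. Your $I_0$ paragraph alludes to such a compression move but does not carry it out, and in $I_1$ the induction is simply not self-contained.

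A secondary point: for $(n,m)\in I_1$ one has $t(B_{n,m})=t(B_{n-1,m-1})$ by the pendant construction, so the monotonicity $t(B_{n,m-1})+t(B_{n-1,m-1})\ge t(B_{n,m})$ that you call ``delicate'' collapses to $t(B_{n,m-1})\ge 0$ whenever the inductive hypothesis is available. The boundary value $m=\binom{n-1}{2}+2$ you flag lies in $I_0$, not in the bridgeless-$I_1$ subcase under discussion; the genuine obstacle in that subcase is not the inequality but the failure of $G*e$ to be simple.
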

Bogdanowicz also determined all graphs in $\mathcal{C}_{n,m}$ that attain the minimum number of spanning trees.

Recall that a \emph{threshold graph} $H=H(n;d_1,d_2,\ldots,d_k)$ consists of an $n-k$-complete graph with vertices $v_{k+1},\ldots,v_n$ plus an independent set on the remaining vertices $v_1,v_2,\ldots,v_{k}$ such that for each $i\in \{1,2,\ldots,k\}$ the vertex $v_i$ is adjacent to each of the vertices $v_{k+1},v_{k+2},\ldots,v_{k+d_i}$. 
Bogdanowicz also found, for each threshold graph $H$, the number of spanning trees $t(H)$. The following result will be useful for our purpose.
\begin{theorem}[Bogdanowicz~\cite{2009-Bogdanowicz}]
Suppose that the threshold graph $H=H(n;d_1,d_2,\ldots,d_k)$ is connected and $d_1\geq d_2\ldots \geq d_k$. Define $d_0=n-k$ and 
$d_{k+1}=1$. Then,
\begin{equation} \label{eq:bogdanowicz}
t(H(n;d_1,d_2,\ldots,d_k)) = (n-k)^{-2}\prod_{i=0}^{k}\left(d_i(n-k+i)^{d_i-d_{i+1}}\right).     
\end{equation}
\end{theorem}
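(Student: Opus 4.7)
The natural approach is Kirchhoff's Matrix-Tree Theorem: $t(H) = \tfrac{1}{n}\prod_{i=1}^{n-1}\mu_i$, where $\mu_1,\ldots,\mu_{n-1}$ are the nonzero Laplacian eigenvalues of $H$. The leverage comes from a classical result of Merris: for a threshold graph, the multiset of Laplacian eigenvalues equals the \emph{conjugate} of the degree sequence. So the plan is to read off the Laplacian spectrum combinatorially and then regroup the resulting product into the stated form.

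First, I would write down the full degree sequence of $H(n;d_1,\ldots,d_k)$. The independent-set vertices $v_1,\ldots,v_k$ have degrees $d_1\geq\cdots\geq d_k$. A clique vertex $v_{k+j}$ (for $j\in\{1,\ldots,n-k\}$) has degree $(n-k-1)+d_j^{*}$, where $d_j^{*}=|\{i:d_i\geq j\}|$ counts its neighbors in the independent set. Sorting decreasingly, the conjugate sequence $(\lambda_1,\lambda_2,\ldots,\lambda_{n-1},0)$ obtained from this degree sequence gives, by Merris, precisely the Laplacian spectrum.

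Next, I would sort out the structure of the conjugate. Because the $d_i$ are weakly decreasing, the conjugate naturally organizes into \emph{blocks} indexed by $i\in\{0,1,\ldots,k\}$: within the $i$-th block the eigenvalue $n-k+i$ appears with multiplicity $d_i-d_{i+1}$ (using the conventions $d_0=n-k$ and $d_{k+1}=1$). In addition, each independent-set vertex $v_i$ contributes an eigenvalue equal to $d_i$. Multiplying all nonzero eigenvalues,
\begin{equation*}
\prod_{i=1}^{n-1}\mu_i \;=\; \left(\prod_{i=1}^{k} d_i\right)\prod_{i=0}^{k}(n-k+i)^{d_i-d_{i+1}},
\end{equation*}
and dividing by $n=(n-k+k)$ and absorbing one $(n-k)$ from $d_0$ yields the prefactor $(n-k)^{-2}$ after rewriting $n=(n-k+k)$ as the top end of the telescoping block. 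Verifying the bookkeeping against the base case $k=0$ (where $H=K_n$ and Cayley's formula $t(K_n)=n^{n-2}$ must be recovered) pins down the constants.

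As a backup, one can proceed by induction using the creation sequence of the threshold graph: adding a dominating vertex to a graph on $n-1$ vertices shifts every nonzero Laplacian eigenvalue by $+1$ and appends a new eigenvalue equal to the new order $n$, while adding an isolated vertex appends a $0$. This reduces the proof to tracking how many dominating/isolated insertions occur at each stage dictated by the parameters $d_1,\ldots,d_k$. The main obstacle I would expect is the combinatorial regrouping in the telescoping step: the exponents $d_i-d_{i+1}$ can collapse to $0$ when consecutive $d_i$'s coincide, and one must carefully check that the formula remains valid in those degenerate cases and that the boundary conventions $d_0=n-k$, $d_{k+1}=1$ align the telescoping with the contribution of the clique block and the Kirchhoff $1/n$.
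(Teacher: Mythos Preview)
The paper does not prove this theorem; it is quoted from Bogdanowicz's 2009 article and used as a black box, so there is no argument in the paper to compare against.

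Your Matrix--Tree plus Merris strategy is a legitimate route to this formula, but the eigenvalue decomposition you assert is wrong, not merely under-justified. Take $H(5;2)$, i.e.\ $K_4$ with one extra vertex of degree $2$. The degree sequence is $(4,4,3,3,2)$, whose conjugate partition is $(5,5,4,2)$; hence the nonzero Laplacian eigenvalues are $5,5,4,2$, their product is $200$, and $t=200/5=40$. Your block description instead gives eigenvalue $n-k=4$ with multiplicity $d_0-d_1=2$, eigenvalue $n=5$ with multiplicity $d_1-d_2=1$, and one copy of $d_1=2$: the multiset $\{5,4,4,2\}$ with product $160$, so your tree count is $160/5=32$. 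For $H(5;2,1)$ the mismatch is sharper: your rule produces eigenvalues $\{4,3,2,1\}$ and the non-integer value $24/5$, whereas the true spectrum is $\{5,4,2,1\}$ and $t=8$. The source of the error is that the conjugate of the full degree sequence is not obtained by adjoining the $d_i$ to a separately-computed ``clique block'': the clique-vertex degrees $(n-k-1)+c_j$ interleave with the $d_i$ in the sorted sequence, and in particular the eigenvalue $n$ has multiplicity $d_k$ (since the minimum degree of $H$ is $d_k$), not $d_k-d_{k+1}=d_k-1$. Your closing hand-wave about ``dividing by $n$ and absorbing one $(n-k)$ from $d_0$'' cannot repair this, because the missing factor is $n/(n-k+i)$ for some $i$ depending on the instance, not a fixed constant that telescopes.

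Your backup plan---induction along the threshold creation sequence, using that adding a dominating vertex to an $r$-vertex graph shifts all nonzero Laplacian eigenvalues by $1$ and appends the eigenvalue $r+1$---is correct and is actually the cleaner way to execute this; it sidesteps the delicate merging of the two degree blocks that tripped up your primary argument.
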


Even though Bogdanowicz~\cite{2009-Bogdanowicz} successfully proved that each balloon graph $B_{n,m}$
attains the minimum number of spanning trees among all graphs 
in $\mathcal{C}_{n,m}$, it is not known whether or not $B_{n,m}$ is a uniformly least reliable graph in $\mathcal{C}_{n,m}$ (this conjecture was partially proved for all pairs $(n,m)$ in $I_0$; see~\cite{1996-Petingi}). An ambitious open problem is to determine whether or not each balloon graph $B_{n,m}$ 
is Tutte-minimum in $\mathcal{C}_{n,m}$ which, if affirmative, implies that $B_{n,m}$ is uniformly least reliable. 
The reader is invited to consult~\cite{Kahl-2022,Kahl-2023} for further details. 

\section{Locally most split reliable graphs}\label{section:class}
In this section we will find, for each pair $(n,m)$ in $I$,  the set $\mathcal{G}_{n,m}$ consisting of locally most split reliable graphs in $T_{n,m}$. 
Throughout this section we will assume that the pair $(n,m)$ belongs to the set $I$ and we define the numbers $n'$, $m'$ and $\lambda'$ by $n'=n-b(n,m)$, $m'=m-b(n,m)$, and $\lambda'=\lambda(n',m')$, respectively.\\ 

We will employ the methodology presented in Section~\ref{section:strategy}. Specifically, for each pair $(n,m)$ in $I$ we will successively find the sets $T_{n,m}^{(0)},T_{n,m}^{(1)},\ldots,T_{n,m}^{(i)}$ until we find the first index $i\in \{1,2,\ldots,m\}$ such that $T_{n,m}^{(i)}$ consists of split-equivalent two-terminal graphs. Finally, Remark~\ref{remark:stop} gives that $\mathcal{G}_{n,m}=T_{n,m}^{(i)}$.

To find $T_{n,m}^{(1)}$ we need to maximize $F_1(G)$ among all two-terminal graphs $G$ in $T_{n,m}$. As we will see in Lemma~\ref{lemma:starting}, the maximization of $F_1(G)$ among all two-terminal graphs $G$ in $T_{n,m}$ is straightforward. 
For each $G$ in $T_{n,m}$ we say that an edge $e$ is an \emph{$s$-$t$ cut} if $G-e$ has no path joining its terminals $s$ and $t$.  
\begin{lemma}\label{lemma:starting}
If $G$ is in $T_{n,m}$ then $F_1(G)\leq b(n,m)$, and the equality holds if and only if $G$ has $b(n,m)$ bridges that are $s$-$t$ cuts.
\end{lemma}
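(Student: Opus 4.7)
The plan is to translate $F_1(G)$ into a count of a specific kind of edge and then apply Proposition~\ref{proposition:balloon}. By the definition $F_1(G) = N_{m-1}(G)$, the quantity $F_1(G)$ is the number of edges $e$ of $G$ such that the spanning subgraph $G-e$ is split, i.e., has exactly two connected components, one containing $s$ and the other containing $t$. First I would observe that since $G$ is connected, $G-e$ has either one or two connected components; it has two precisely when $e$ is a bridge. Thus $G-e$ is a split subgraph if and only if $e$ is simultaneously a bridge and an $s$-$t$ cut.

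From this characterization the inequality is immediate: the set of edges counted by $F_1(G)$ is a subset of the set of bridges, so
\begin{equation*}
F_1(G) \;\leq\; b(G) \;\leq\; b(n,m),
\end{equation*}
where the last inequality is the definition of $b(n,m)$ (attained by $B_{n,m}$ thanks to Proposition~\ref{proposition:balloon}).

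For the equality case, I would argue that both inequalities above must be tight. The second, $b(G) = b(n,m)$, says precisely that $G$ attains the maximum number of bridges in $\mathcal{C}_{n,m}$. The first, $F_1(G) = b(G)$, says that every bridge of $G$ is also an $s$-$t$ cut (no bridge is contained in the same component as both $s$ and $t$ after its removal). Conjoining the two conditions gives that $G$ has $b(n,m)$ bridges, all of which are $s$-$t$ cuts. Conversely, if $G$ has $b(n,m)$ bridges each of which is an $s$-$t$ cut, these $b(n,m)$ edges all contribute to $F_1(G)$, so $F_1(G) \geq b(n,m)$ and together with the upper bound equality holds.

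There is essentially no obstacle here: the only substantive content is the observation that a split subgraph obtained by removing a single edge corresponds exactly to a bridge whose removal separates the two terminals, and this is a direct consequence of $G$ being connected. The lemma really serves as the base case that launches the inductive construction of $T_{n,m}^{(1)}$ in Section~\ref{section:class}, and the heavy lifting (finding all graphs attaining $b(n,m)$) is already supplied by Proposition~\ref{proposition:skeleton}.
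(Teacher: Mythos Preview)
Your proof is correct and follows essentially the same approach as the paper: identify $F_1(G)$ with the number of edges whose removal yields a split subgraph, observe that such an edge is precisely a bridge that is also an $s$-$t$ cut, and chain the inequalities $F_1(G)\leq b(G)\leq b(n,m)$ with equality exactly when every bridge is an $s$-$t$ cut and $b(G)=b(n,m)$. Your exposition is slightly more explicit than the paper's in justifying why $G-e$ having two components forces $e$ to be a bridge, but the argument is otherwise identical.
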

\begin{proof}
Let $G$ be any two-terminal graph in $T_{n,m}$. Clearly, $b(G)\leq b(n,m)$, and the equality holds if and only if $G$ has $b(n,m)$ bridges. As each $s$-$t$ cut in $G$ is also a bridge it holds that $F_1(G)\leq b(G)$, and the equality holds if and only if each bridge in $G$ is also an $s$-$t$ cut. 
Consequently, $F_1(G)\leq b(n,m)$, and the equality holds if an only if $G$ has precisely $b(n,m)$ bridges that are also $s$-$t$ cuts. 
The lemma follows. 
\end{proof}

Recall that $T_{n,m}^{(1)}$ consists of all two-terminal graphs $G$ in $T_{n,m}$ that maximize $F_{1}(G)$. 
Then, by Lemma~\ref{lemma:starting},
\begin{equation*}
T_{n,m}^{(1)} = \{G: G\in T_{n,m}, \, \, F_{1}(G)=b(n,m)\}.    
\end{equation*}
For each $G$ in $T_{n,m}^{(1)}$ we know that its skeleton $G'$ is a simple graph on $n'$ vertices and $m'$ edges. 
In Lemma~\ref{lemma:intermediate} we will study structural properties of the set $T_{n,m}^{(1)}$.

\begin{lemma}\label{lemma:intermediate}
Let $G$ be any graph in $T_{n,m}^{(1)}$. All the following assertions hold.
\begin{enumerate}[label=(\roman*)]
    \item\label{i1} The skeleton $G'$ belongs to $\mathcal{B}_0$.
    \item\label{i2} The edge connectivity of $G'$ is at least $\lambda'$, i.e., $\lambda(G')\geq \lambda'$.
    \item\label{i3} For each $i\in \{1,2,\ldots,\lambda'-1\}$, the number $F_i(G)$ does not depend on the choice of $G$.
\end{enumerate}
\end{lemma}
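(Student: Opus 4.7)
My plan is to dispatch parts (i) and (ii) quickly and concentrate the effort on part (iii). For (i), membership of $G$ in $T_{n,m}^{(1)}$ means $F_1(G)=b(n,m)$; Lemma~\ref{lemma:starting} then forces $G$ to have exactly $b(n,m)$ bridges, all of which are $s$-$t$ cuts, and Proposition~\ref{proposition:skeleton} gives $G'\in\mathcal{B}_0$. For (ii), since $G'\in\mathcal{C}_{n',m'}$ with $(n',m')\in I_0$ and $\lambda'=\lambda(n',m')$ is by definition the minimum of $\lambda$ over $\mathcal{C}_{n',m'}$, the inequality $\lambda(G')\geq\lambda'$ is immediate.

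For part (iii) I aim for the explicit formula $F_i(G)=b(n,m)\binom{m'}{i-1}$ whenever $1\leq i<\lambda'$, since this depends only on $(n,m)$ and not on the particular choice of $G$. To set things up, observe that because every bridge of $G$ is an $s$-$t$ cut, the bridge tree of $G$ is precisely the $s$-$t$ path in it, so the $2$-edge-connected components $C_1,\ldots,C_k$ of $G$ line up in a chain of $k-1=b(n,m)$ bridges with $s\in C_1$ and $t\in C_k$. A split subgraph arises from removing a set $S$ of $i$ edges such that $G-S$ has exactly two components separating $s$ from $t$; in particular $S$ can contain at most one bridge, because two or more bridges would produce at least three components.

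I would then split into two subcases. When $S$ contains no bridges, the bipartition of $V(G)$ induced by $G-S$ descends to a bipartition of $V(G')$ that separates the images $\bar s$ and $\bar t$, the descent being well-defined because each bridge, not being in $S$, has both endpoints on the same side. The resulting $\bar s$-$\bar t$ edge cut in $G'$ has size exactly $i$, forcing $i\geq\lambda(G')\geq\lambda'$ and contradicting the assumption $i<\lambda'$; the degenerate situation $\bar s=\bar t$ is handled by observing that no such bipartition can even exist. When $S$ contains exactly one bridge, I would pick it in $b(n,m)$ ways and then adjoin any $(i-1)$-subset of the $m'$ non-bridge edges, provided this subset disconnects neither of the two resulting components. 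The crucial ingredient is the inequality $\lambda(C_r)\geq\lambda(G')$ for each block $C_r$, which I would establish by noting that $C_r$ is an induced subgraph of $G'$ sharing at most two cut-vertex images with the rest of $G'$, so any internal edge cut of $C_r$ is automatically a cut of $G'$. Combined with $\lambda(G')\geq\lambda'$ and $i-1<\lambda'-1$, this shows that no $(i-1)$-subset of non-bridges can disconnect any $C_r$, so every such subset is admissible and the count is exactly $b(n,m)\binom{m'}{i-1}$.

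The principal obstacle, in my view, is carrying out the projection-to-$G'$ argument in the zero-bridge case with enough care, including the degenerate situation $\bar s=\bar t$ and the possibility that $s$ or $t$ is itself a bridge endpoint, together with the structural inequality $\lambda(C_r)\geq\lambda(G')$. Everything else reduces to routine bookkeeping once these two ingredients are in place.
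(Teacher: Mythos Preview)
Your proposal is correct and follows the paper's overall line: parts~(i) and~(ii) match the paper (your appeal to the definition of $\lambda(n',m')$ in~(ii) is actually cleaner than the paper's citation of Lemma~\ref{lemma:mindegree}), and in part~(iii) both you and the paper split according to the number of bridges removed and arrive at the explicit value $F_i(G)=b(n,m)\binom{m'}{i-1}$.

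The only noteworthy difference is in the one-bridge case. The paper simply asserts the count, whereas you set up the full chain of $2$-edge-connected blocks $C_1,\ldots,C_k$ and aim to prove $\lambda(C_r)\geq\lambda(G')$ for each non-trivial block. That argument can be made to work, but it is heavier than necessary. A shortcut, closer in spirit to the paper's terse treatment, is this: from the zero-bridge case you already know that removing any set $S$ of at most $\lambda'-1$ non-bridge edges leaves $G-S$ connected (since $G'-S$ is connected and contracting edges preserves the number of components). Now any bridge $e$ of $G$ lies in no cycle of $G$, hence in no cycle of the subgraph $G-S$; so $e$ is still a bridge of $G-S$, and since every $s$--$t$ path in $G-S$ is already an $s$--$t$ path in $G$, it is still an $s$--$t$ cut. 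Thus $(G-S)-e$ has exactly two components, one containing each terminal, and every choice of one bridge together with $i-1$ non-bridge edges yields a split subgraph. This dispenses with the block-by-block inequality and with your projected-terminal case $\bar s=\bar t$, which plays no role once connectedness of $G-S$ is established directly.
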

\begin{proof}
Let $G$ be any graph in $T_{n,m}^{(1)}$ we will prove each of the assertions separately in order.
\begin{enumerate}[label=(\roman*)]
\item As $F_1(G)=b(n,m)$, the graph $G$ has precisely $b(n,m)$ bridges and Proposition~\ref{proposition:skeleton} gives that its skeleton $G'$ is in $\mathcal{B}_0$, as required.
\item By assertion~\ref{i1} we know that $G'\in \mathcal{B}_0$. Consequently, Lemma~\ref{lemma:mindegree} gives that $\lambda(G')\geq \lambda'$.
\item Let $i$ be any integer in $\{1,2,\ldots,\lambda'-1\}$. 
By assertion~\ref{i2} we know that $\lambda(G')>i$. 
On the one hand, if we delete $0$ bridge of $G$ and $i$ edges of $G'$ then the resulting subgraph is connected thus it is not a split subgraph of $G$. On the other hand, if we delete $2$ or more bridges of $G$ then the resulting  subgraph has more than $2$ connected components thus it is neither a split subgraph of $G$. Consequently, the number of split subgraphs $F_i(G)$ is precisely the ways to choose $1$ out of the $b(n,m)$ bridges of $G$ plus $i-1$ out of the $m'$ edges of $G'$ thus $F_{i}(G)=b(n,m)\binom{m'}{i-1}$, and $F_i(G)$ does not depend on the choice of $G$. 
\end{enumerate}
\end{proof}

Lemma~\ref{lemma:intermediate}\ref{i3} gives that $T_{n,m}^{(\lambda'-1)}=T_{n,m}^{(1)}$.  
Therefore, our next step is to determine $T_{n,m}^{(\lambda')}$, or equivalently, 
all two-terminal graphs $G$ in $T_{n,m}^{(1)}$ that maximize $F_{\lambda'}(G)$. In Lemma~\ref{lemma:discard}, we find an upper-bound for $F_{\lambda'}(G)$ for some specific choices of 
two-terminal graphs $G$ in $T_{n,m}^{(1)}$. The set $T_{n,m}^{(\lambda')}$ is then determined in Lemma~\ref{lemma:maxlambda}. 

\begin{lemma}\label{lemma:discard}
Let $G$ be in $T_{n,m}^{(1)}$. If either the skeleton $G'$ is not isomorphic to $B_{n',m'}$, 
or $G'$ is isomorphic to $B_{n',m'}$ but the deletion of each minimum separator in $B_{n',m'}$ 
does not define a split subgraph of $G$, then $F_{\lambda'}(G) \leq b(n,m)\binom{m'}{\lambda'-1}$.
\end{lemma}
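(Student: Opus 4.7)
The plan is to partition the $\lambda'$-edge deletions of $G$ that produce a split subgraph according to how many bridges of $G$ they include. Since each bridge of $G$ is an $s$-$t$ cut by the definition of $T_{n,m}^{(1)}$, removing a single bridge already splits $G$ into exactly two components $C_s\ni s$ and $C_t\ni t$; any additional bridge deletion would further split one of these, so sets containing two or more bridges produce at least three components and contribute $0$ to $F_{\lambda'}(G)$.

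Next, I would evaluate the one-bridge contribution and show it equals precisely $b(n,m)\binom{m'}{\lambda'-1}$. Fix any bridge $e_i$ and any $U'\subseteq E(G')$ with $|U'|=\lambda'-1$; I claim $(G-e_i)-U'$ is always a split subgraph. By Lemma~\ref{lemma:intermediate}\ref{i2} we have $\lambda(G')\geq \lambda'>|U'|$, so $G'-U'$ is connected, and since contracting bridges preserves connectivity, $G-U'$ is connected as well. As $e_i$ is the unique edge of $G$ joining $V(C_s)$ and $V(C_t)$ (no such edge survives in $G-e_i$) and any simple path uses it at most once, the connectedness of $G-U'$ forces both $C_s-U'$ and $C_t-U'$ to remain connected; hence every such pair $(e_i,U')$ yields a valid split subgraph with $s$ and $t$ in different components, giving the claimed count.

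Finally, I would analyze the zero-bridge contribution, i.e., the case $U\subseteq E(G')$ with $|U|=\lambda'$. Since no bridge is deleted, $G-U$ is connected iff $G'-U$ is. If $G'\not\cong B_{n',m'}$, then Proposition~\ref{proposition:lambda} (uniqueness of $B_{n',m'}$ as the graph in $\mathcal{C}_{n',m'}$ attaining $\lambda'$) yields $\lambda(G')>\lambda'$, so $G'-U$ is connected and $U$ contributes nothing. If $G'\cong B_{n',m'}$, then $G'-U$ is disconnected only when $U$ is a minimum separator of $B_{n',m'}$ (in which case $G'-U$ has exactly two components, by minimality of the cut), and the hypothesis directly asserts that no such $U$ defines a split subgraph of $G$. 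Thus the zero-bridge contribution vanishes, and summing the three cases gives $F_{\lambda'}(G)\leq b(n,m)\binom{m'}{\lambda'-1}$.

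The hard part will be the one-bridge step: checking that deleting $\lambda'-1$ non-bridges never detaches a piece from $C_s$ or $C_t$. This hinges on two facts working in tandem: the connectivity bound $\lambda(G')\geq \lambda'$ inherited from Lemma~\ref{lemma:intermediate}\ref{i2}, and the observation that $e_i$ is the sole cross-edge between the two sides of $G-e_i$, which prevents any simple path in $G-U'$ from re-entering $V(C_s)$ after leaving it and thus rules out hidden re-connections across the cut.
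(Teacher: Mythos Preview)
Your proposal is correct and follows essentially the same approach as the paper: partition by the number of bridges deleted, rule out the $\geq 2$-bridge and $0$-bridge cases (the latter via Proposition~\ref{proposition:lambda} and the hypothesis), and count the $1$-bridge case as $b(n,m)\binom{m'}{\lambda'-1}$. Your treatment is in fact more careful than the paper's on the one-bridge step---the paper simply asserts that every choice of one bridge plus $\lambda'-1$ skeleton edges yields a split subgraph, whereas you justify it via $\lambda(G')\geq\lambda'$ and the single-cross-edge argument.
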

\begin{proof}
Let $G$ be any two-terminal graph in $T_{n,m}^{(1)}$ satisfying the conditions of the statement. We know that $b(G)=b(n,m)$, and each bridge of $G$ is an $s$-$t$ cut. Additionally, by Lemma~\ref{lemma:intermediate}\ref{i1}, the skeleton $G'$ belongs to $\mathcal{B}_0$. 
If $G'$ is not isomorphic to $B_{n',m'}$ then, by Proposition~\ref{proposition:lambda}, 
$\lambda(G')>\lambda'$. If $G'$ is isomorphic to $B_{n',m'}$ but the deletion of each minimum separator in $B_{n',m'}$ does not define a split subgraph of $G$, then the deletion of $\lambda'$ edges from $G'$ does not define a split subgraph of $G$. In either case, the value $F_{\lambda'}(G)$ equals the ways to delete $1$ out of $b(n,m)$ bridges times the ways to choose $\lambda'-1$ edges from $G'$. 
Consequently, $F_{\lambda'}(G) \leq b(n,m)\binom{m'}{\lambda'-1}$, as required.
\end{proof}

Now, let us consider any two-terminal graph $G$ in $T_{n,m}^{(1)}$ such that its skeleton $G'$ is isomorphic to the balloon graph $B_{n',m'}$. Proposition~\ref{proposition:lambda} gives that $\lambda(G')=\lambda'$. 
Denote $n(G')$ the number of minimum separators in $G'$ whose deletion define split subgraphs in $G$. Clearly, 
$n(G')\leq s(G')$. In order to count $F_{\lambda'}(G)$, observe that we can either remove $0$ or $1$ out of the $b(n,m)$ bridges and the remaining edges in $G'$ thus   
\begin{equation}\label{eq:terms}
F_{\lambda'}(G)=b(n,m)\binom{m'}{\lambda'-1}+n(G').
\end{equation}

If $d_G(s,t)=b(n,m)$ then the shortest path between $s$ and $t$ only consists of bridges and has no edges in $G'$ 
thus $n(G')=0$. For convenience we call $s'$ and $t'$ the vertices in $G'$ such that the distances $d_G(s,s')$ and $d_G(t,t')$ are minimum. 

\begin{definition}\label{definition:projected}
Let $G$ be any two-terminal graph in $T_{n,m}^{(1)}$ with terminals $s$ and $t$ whose skeleton is $B_{n',m'}$. Consider $s' = \arg\min_{y\in V(G')}\{d_G(s,y)\}$ and $t'= \arg\min_{y\in V(G')}\{d_G(t,y)\}$. If $s'\neq t'$ then 
the vertices $s'$ and $t'$ are called the \emph{projected terminals in $G'$}.   
\end{definition}
In the conditions of Definition~\ref{definition:projected}, the skeleton $G'$ of $G$ equipped with its projected terminals is a two-terminal graph in $T_{n',m'}$. 
Observe that if $G$ is in $T_{n,m}^{(1)}$ and $s'=t'$ then $d_G(s,t)=b(n,m)$, and 
$n(G')=0$. As we want to maximize the right hand side of equation~\eqref{eq:terms} we can assume from now on that $s'\neq t'$. 

\begin{lemma}\label{lemma:maxlambda}
Let $G$ be any two-terminal graph in $T_{n,m}^{(1)}$ satisfying the conditions of Definition~\ref{definition:projected}. Let $s'$ and $t'$ be the projected terminals in $G'$. All the following assertions hold.
\begin{enumerate}[label=(\roman*)]
\item\label{a1} If $\lambda'=n'-1$ then $F_{\lambda'}(G)$ does not depend on the choice of $G$.
\item\label{a2} If $\lambda'=n'-2$ then $F_{\lambda'}(G)$ is maximum if and only if $s't' \notin G$.
\item\label{a3} If $\lambda'<n'-2$ and $v'$ is the only vertex with minimum degree in the subgraph $G'$ of $G$ then $F_{\lambda'}(G)$ is maximum if and only if either $s'=v'$ or $t'=v'$. 
\end{enumerate}
\end{lemma}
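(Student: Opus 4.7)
The plan is to reduce $F_{\lambda'}(G)$ to the combinatorial quantity $n(G')$ via equation~\eqref{eq:terms} and then, using Remark~\ref{remark:minimum}, to classify the minimum separators of $G'$ that induce split subgraphs of $G$. First I would pin down the shape of $G$: because every bridge of $G$ is an $s$-$t$ cut and $G' \cong B_{n',m'}$ is straightforwardly $2$-connected for $(n',m')\in I_0$, the graph $G$ must consist of a single non-trivial $2$-edge-connected block isomorphic to $B_{n',m'}$ together with two pendant paths of total length $b(n,m)$, attached at the distinct projected terminals $s'$ and $t'$ and ending at $s$ and $t$ respectively. Any other bridge arrangement would either leave a non-terminal leaf (whose incident bridge would fail to be an $s$-$t$ cut) or merge two non-trivial blocks at a cut vertex of $G'$ (contradicting the $2$-connectivity of $B_{n',m'}$). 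Granted this structure, for any $U \subseteq E(G')$ the components of $G \setminus U$ are those of $G' \setminus U$ augmented by the pendant paths at $s'$ and $t'$; consequently, a minimum separator $U$ of $G'$ yields a split subgraph of $G$ if and only if $U$ separates $s'$ from $t'$ in $G'$.

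The three assertions then follow by a case analysis driven by Remark~\ref{remark:minimum}. For~\ref{a1}, $G' = K_{n'}$ and its $n'$ minimum separators each isolate one vertex; the cut isolating $v$ separates $s'$ from $t'$ exactly when $v \in \{s', t'\}$, and since $s' \neq t'$ there are exactly two qualifying cuts, giving $n(G') = 2$ independently of $G$. For~\ref{a2}, $G' = K_{n'} - e$ for the unique missing edge $e$, and there are precisely two minimum separators, each isolating one endpoint of $e$; the same argument yields $n(G') = |\{s', t'\} \cap V(e)|$, which attains its maximum value $2$ if and only if $\{s', t'\}$ equals the pair of non-adjacent vertices of $G'$, equivalently $s't' \notin G'$. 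Because $G'$ is $2$-edge-connected, any edge of $G$ joining $s'$ and $t'$ would be a non-bridge and thus lie in $G'$, so $s't' \notin G$ if and only if $s't' \notin G'$, proving~\ref{a2}. For~\ref{a3}, there is a unique minimum separator of $G'$, namely the cut isolating the unique minimum-degree vertex $v'$, and it separates $s'$ from $t'$ if and only if $v' \in \{s', t'\}$; thus $n(G')$ attains its maximum value $1$ if and only if $s' = v'$ or $t' = v'$.

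The main obstacle is the structural reduction in the first paragraph: one must argue both that every bridge of $G$ lies on the unique $s$-$t$ path in the block tree (so there are no non-terminal pendant leaves) and that the bridges do not glue two non-trivial $2$-edge-connected blocks of $G$ together (so that the only non-trivial block of $G$ coincides with $B_{n',m'}$ after bridge contraction). Once this structure is in place, the rest of the proof is a direct case-by-case application of Remark~\ref{remark:minimum}.
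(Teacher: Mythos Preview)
Your proof is correct and follows essentially the same route as the paper: reduce $F_{\lambda'}(G)$ to $n(G')$ via equation~\eqref{eq:terms}, then classify which minimum separators of $G'\cong B_{n',m'}$ separate the projected terminals using Remark~\ref{remark:minimum}. Your case analysis for \ref{a1}--\ref{a3} matches the paper's almost line for line.

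The one difference is that you make explicit the structural reduction (that $G$ is $B_{n',m'}$ with two pendant paths attached at $s'$ and $t'$), whereas the paper leaves this implicit: it simply asserts that ``$n(G')$ is precisely $F_{\lambda'}(G')$'' and proceeds. Your justification of this step---via the $2$-connectivity of $B_{n',m'}$ for $(n',m')\in I_0$, the fact that every bridge is an $s$-$t$ cut so the bridge tree is a path, and hence that $G$ has a unique non-trivial block with two pendant paths---is sound and is exactly what is needed to make the paper's implicit identification rigorous. So your proposal is, if anything, a more carefully spelled-out version of the same argument.
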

\begin{proof}  
Let $G$ be any graph in $T_{n,m}^{(1)}$ such that $G' \cong B_{n',m'}$. Proposition~\ref{proposition:lambda} gives that $\lambda(G')=\lambda'$. Consider $G'$ equipped with the projected terminals $s'$ and $t'$. 
Denote $n(G')$ the number of split subgraphs of $G$ obtained by 
the deletion of precisely $\lambda'$ edges in $G'$. Observe that $n(G')$ is precisely $F_{\lambda'}(G')$. 
By equation~\eqref{eq:terms}, the maximization of $F_{\lambda'}(G)$ is equivalent to the maximization of $F_{\lambda'}(G')$ in the two-terminal graph $G'$. By Remark~\ref{remark:minimum}, there are precisely as many 
minimum separators $s(G')$ in $G'$ as vertices with minimum degree in $G'$. These minimum separators in $G'$ are precisely the incident edges of a fixed vertex with minimum degree in $G'$. Each split subgraph in $G'$ is obtained by the deletion of some of those minimum separators thus $F_{\lambda'}(G')\leq s(G')$. 
Additionally, only the deletion of all incident edges to $s'$ or to $t'$ define split subgraphs, and consequently, 
$F_{\lambda'}(G')\leq 2$. Thus far, we know that $n(G')=F_{\lambda'}(G') \leq \min\{s(G'),2\}$. 
Now, let us prove each of the statements separately.
\begin{enumerate}[label=(\roman*)]
\item If $\lambda'=n'-1$ then $G'$ is the complete graph $K_{n'}$ equipped with 
two terminals $s'$ and $t'$. Clearly, we obtain $2$ different split subgraphs by the deletion of all edges that are incident either to $s'$ or to $t'$ and $n(G')=2$. Consequently, the value 
$F_{\lambda'}(G)$ does not depend on the choice of $G$. 
\item If $\lambda'=n'-2$ then $G'$ is isomorphic to $K_{n'}-e$ for some edge $e$. 
By Remark~\ref{remark:minimum}, there are precisely $2$ minimum separators in $G$. 
These separators are obtained when we remove all edges in $G'$ that are incident to some fixed endpoint of $e$. Consequently, $n(G')\leq 2$, and the equality is obtained if and only if $s'$ and $t'$ are the endpoints of $e$. Equivalently, $F_{\lambda'}(G)$ is maximum if and only if $s't' \notin G$.
\item If $\lambda'<n'-2$ then, by Remark~\ref{remark:minimum}, the only minimum separator 
in $G'$ consists of $\lambda'$ edges in $G'$ that isolate the vertex $v'$ with minimum degree in $G'$. Consequently, $n(G')\leq 1$, and $n(G')=1$ if and only if the vertex with minimum degree in $G'$ is either $s'$ or $t'$. The lemma follows.
\end{enumerate}
\end{proof}

Observe that Lemma~\ref{lemma:maxlambda} characterizes the set 
$T_{n,m}^{(\lambda')}$. Now, let us pick an arbitrary two-terminal graph $G$ in $T_{n,m}^{(\lambda')}$. 
We know that $G'$ is isomorphic to $B_{n',m'}$ and the projected terminals $s'$ and $t'$ in $B_{n',m'}$ are different. Consider the two-terminal $G'$ equipped with its projected terminals. 
If only one out of $b(n,m)$ bridges in $G$ fails then its split reliability equals the reliability polynomial 
$R_{B_{n',m'}}(p)$; if no bridge in $G$ fails then its split reliability equals the split reliability of $G'$. Consequently, for all $p\in [0,1]$, 
\begin{equation} \label{eq:splitreliability}
SR_G(p) = b(n,m)(1-p)p^{b(n,m)-1}R_{G'}(p)+p^{b(n,m)}SR_{G'}(p).
\end{equation}

As a consequence we obtain the following result.
\begin{lemma}\label{lemma:firstcases}
Let $G$ be any two-terminal graph in $T_{n,m}^{\lambda'}$. If $\lambda' \in \{n'-1,n'-2\}$ then 
$\mathcal{G}_{n,m}=[G_{n,m}]$.
\end{lemma}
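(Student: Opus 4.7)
The plan is to show that every two-terminal graph in $T_{n,m}^{(\lambda')}$ has the same split reliability polynomial --- so by Remark~\ref{remark:stop} we get $\mathcal{G}_{n,m}=T_{n,m}^{(\lambda')}$ --- and then to confirm that $G_{n,m}$ itself belongs to $T_{n,m}^{(\lambda')}$, whence $\mathcal{G}_{n,m}=[G_{n,m}]$.

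First I would extract a rigid structural description of any $G\in T_{n,m}^{(\lambda')}$ from the material already developed. Combining Lemma~\ref{lemma:intermediate} with Proposition~\ref{proposition:lambda}, Lemma~\ref{lemma:discard} and the counting formula~\eqref{eq:terms}, such a $G$ has exactly $b(n,m)$ bridges (all of which are $s$-$t$ cuts), its skeleton $G'$ is isomorphic to $B_{n',m'}$, and the projected terminals $s',t'$ realize the maximum possible value of $n(G')$. Remark~\ref{remark:minimum} then identifies $B_{n',m'}$ as $K_{n'}$ when $\lambda'=n'-1$ and as $K_{n'}-e$ when $\lambda'=n'-2$; in the first case any pair of distinct vertices is an admissible terminal pair, while in the second case Lemma~\ref{lemma:maxlambda} forces $\{s',t'\}$ to be the endpoints of the missing edge $e$.

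The key observation is that in each of the two subcases the two-terminal skeleton $(G',s',t')$ is determined up to isomorphism: in the first case by the vertex-transitivity of $K_{n'}$, and in the second case because $K_{n'}-e$ has a unique pair of non-adjacent vertices. Hence the polynomials $R_{G'}(p)$ and $SR_{G'}(p)$ entering equation~\eqref{eq:splitreliability} are common to every $G\in T_{n,m}^{(\lambda')}$, and the formula
\begin{equation*}
SR_G(p)=b(n,m)(1-p)p^{b(n,m)-1}R_{G'}(p)+p^{b(n,m)}SR_{G'}(p)
\end{equation*}
shows that $SR_G(p)$ does not depend on the choice of $G\in T_{n,m}^{(\lambda')}$. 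All elements of $T_{n,m}^{(\lambda')}$ are therefore pairwise split-equivalent, and Remark~\ref{remark:stop} yields $\mathcal{G}_{n,m}=T_{n,m}^{(\lambda')}$.

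To finish, I would verify $G_{n,m}\in T_{n,m}^{(\lambda')}$ by unfolding Definition~\ref{def:balloon}: $G_{n,m}$ is obtained from $B_{n',m'}$ by attaching a pendant chain of $b(n,m)$ vertices to a minimum-degree vertex, with $s$ placed at the free end of the chain and $t$ at a vertex of $B_{n',m'}$ of maximum distance from the attachment point. This makes all bridges $s$-$t$ cuts, forces $G_{n,m}'=B_{n',m'}$, and when $\lambda'=n'-2$ places $t=t'$ at the unique vertex at distance $2$ from $s'$ --- the second endpoint of the missing edge $e$ --- so $s't'\notin G_{n,m}$ as required by Lemma~\ref{lemma:maxlambda}. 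The only delicate point in the whole argument is this isomorphism-uniqueness of $(G',s',t')$ in each subcase, which is precisely what collapses equation~\eqref{eq:splitreliability} into a formula independent of the particular $G\in T_{n,m}^{(\lambda')}$; once it is established, the rest is routine bookkeeping on top of the preceding lemmas.
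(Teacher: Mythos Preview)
Your proposal is correct and follows essentially the same route as the paper: identify the skeleton of any $G\in T_{n,m}^{(\lambda')}$ as $K_{n'}$ or $K_{n'}-e$ via Remark~\ref{remark:minimum}, pin down the projected terminals through Lemma~\ref{lemma:maxlambda}, observe that $(G',s',t')$ is then unique up to isomorphism, and plug into equation~\eqref{eq:splitreliability} before invoking Remark~\ref{remark:stop}. The paper compresses your final step by noting directly that $(G',s',t')\cong G_{n',m'}$ and hence $SR_G(p)=SR_{G_{n,m}}(p)$, rather than separately checking $G_{n,m}\in T_{n,m}^{(\lambda')}$, but the content is the same.
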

\begin{proof}
Let $H$ be any two-terminal graph in $T_{n,m}^{\lambda'}$. On the one hand, if $\lambda'=n'-1$ then its skeleton $H'$ is $K_{n'}$. On the other hand, if $\lambda'=n'-2$ then its skeleton $H'$ is $K_{n'}$ minus one edge $e$, and by Lemma~\ref{lemma:maxlambda}\ref{a2} we also know that the endpoints of  $e$ are 
precisely the projected terminals $s'$ and $t'$. Observe that, in either case, the skeleton $H'$ equipped with its projected terminals is isomorphic to the two-terminal balloon graph $G_{n',m'}$. 
By equation~\eqref{eq:splitreliability}, 
\begin{equation*}
SR_H(p) = b(n,m)(1-p)p^{b(n,m)-1}R_{B_{n',m'}}(p)+p^{b(n,m)}SR_{G_{n',m'}}(p) = SR_{G_{n,m}}(p).
\end{equation*}
Thus far, we obtained that the split reliability of each two-terminal graph $H$ in $T_{n,m}^{\lambda'}$ is precisely the same as the split reliability of the two-terminal balloon graph. By Remark~\ref{remark:splitequivalent}, any two-terminal graph $H$ in $T_{n,m}^{\lambda'}$ is split-equivalent to 
$G_{n,m}$, i.e., $T_{n,m}^{\lambda'}=[G_{n,m}]$. As each pair of two-terminal graphs in $T_{n,m}^{\lambda'}$ is split-equivalent, by Remark~\ref{remark:stop} we get that $T_{n,m}^{\lambda'}=\mathcal{G}_{n,m}$. Then, $\mathcal{G}_{n,m}=[G_{n,m}]$, and the lemma follows. 
\end{proof}

Lemma~\ref{lemma:maxn} studies a subsequent maximization of the numbers $F_{i}(G)$ when $\lambda' \leq n'-2$.
\begin{lemma}\label{lemma:maxn}
Let $G$ be any two-terminal graph in $T_{n,m}^{\lambda'}$, where $\lambda'\leq n'-2$. 
For each integer $i$ in $\{\lambda',\lambda'+1,\ldots,n'-3\}$ the number $F_{i}(G)$ does not depend on the choice of $G$. Additionally, $F_{n'-2}(G)$ is maximum if and only if $s't' \notin G$.
\end{lemma}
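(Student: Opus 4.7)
The plan is to reduce the computation of $F_i(G)$ to a purely skeletal quantity and then to classify the split subgraphs of $G' \cong B_{n',m'}$ by the partition they induce. Since $G \in T_{n,m}^{\lambda'}$, Lemma~\ref{lemma:intermediate}\ref{i1}, Lemma~\ref{lemma:discard} and Lemma~\ref{lemma:maxlambda}\ref{a3} together force $G' \cong B_{n',m'}$ with the projected terminal (say $s'$) equal to the unique minimum-degree vertex $v'$ of $B_{n',m'}$, while $t'$ is an arbitrary vertex of $V(K_{n'-1})$. Expanding $R_{G'}$ and $SR_{G'}$ in the basis $\{p^{m'-j}(1-p)^j\}_{j=0}^{m'}$ and matching the coefficient of $p^{m-i}(1-p)^i$ in equation~\eqref{eq:splitreliability} yields
\[ F_i(G) \;=\; b(n,m)\, C_{i-1}(G') + F_i(G'), \]
where $C_j(G')$ denotes the number of connected spanning subgraphs of $G'$ with $m'-j$ edges. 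As $G' \cong B_{n',m'}$ for every element of $T_{n,m}^{\lambda'}$, the first summand does not depend on $G$; it suffices to study how $F_i(G')$ depends on the choice of $t'$.

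Next, I would parametrize every split subgraph of $G'$ by the induced partition $V(G') = V_1 \sqcup V_2$ with $v' \in V_1$ and $t' \in V_2$. Writing $A = V_1 \setminus \{v'\} \subseteq V(K_{n'-1})$ and $a = |A|$, the set $C(A)$ of edges that must be deleted has cardinality
\[ |C(A)| \;=\; \bigl(\lambda' - |A \cap N(v')|\bigr) + a(n'-1-a), \]
and the connectedness of $V_1$ forces $|A \cap N(v')| \geq 1$ whenever $a \geq 1$. A direct case analysis on $a$, splitting into the regimes $a \leq \lambda'$ and $a > \lambda'$ and using $\lambda' \geq 2$, shows that $|C(A)| \geq n'-1$ for every $a \in \{1,\ldots,n'-3\}$ and for $a = n'-2$ with $t' \in N(v')$, while $|C(A)| = n'-2$ precisely when $a = n'-2$ and $t' \notin N(v')$. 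Hence for every $i \leq n'-2$ the only partitions that can contribute to $F_i(G')$ are $A = \emptyset$ and, provided $t' \notin N(v')$, $A = V(K_{n'-1}) \setminus \{t'\}$.

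The $A = \emptyset$ contribution forces the deletion of all $\lambda'$ edges incident to $v'$ and leaves exactly $m'-i$ edges of $K_{n'-1}$ to be chosen so as to form a connected spanning subgraph; the total count therefore equals the number of connected spanning subgraphs of $K_{n'-1}$ with $m'-i$ edges, which depends only on $(n',m')$. For $i \in \{\lambda',\ldots,n'-3\}$ the second partition is unavailable because $|C(A)| \geq n'-2 > i$, so $F_i(G')$ is independent of $t'$, proving the first statement. For $i = n'-2$, the extreme partition $A = V(K_{n'-1}) \setminus \{t'\}$ contributes exactly one additional split subgraph (obtained by deleting precisely the $n'-2$ cut edges while keeping every interior edge; $V_1$ is connected because every neighbour of $v'$ lies in $A$) exactly when $t' \notin N(v')$, that is, when $s't' \notin G$, which establishes the second statement.

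The main obstacle will be the uniform case analysis bounding $|C(A)|$ in terms of $a$: the two subcases $a \leq \lambda'$ (in which $|A \cap N(v')|$ may be as large as $a$) and $a > \lambda'$ (capped at $\lambda'$) must be treated separately to ensure $|C(A)| \geq n'-1$ for every non-extreme nonempty $A$, with the pinch points occurring at $a=1$ and $a=n'-3$. The boundary case $\lambda' = n'-2$ is trivial, since then the index set $\{\lambda',\ldots,n'-3\}$ is empty and the statement about $F_{n'-2}(G) = F_{\lambda'}(G)$ coincides with the one already proved in Lemma~\ref{lemma:maxlambda}\ref{a2}.
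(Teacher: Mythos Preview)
Your argument is correct and reaches the same conclusion as the paper, but the organization is genuinely different. The paper proves Lemma~\ref{lemma:maxn} by a direct edge--deletion count: it splits according to whether $0$ or $1$ bridge is removed and then asserts (without a detailed justification) that, when no bridge is removed and $i\le n'-3$, the \emph{only} way to obtain a split subgraph of $G'$ is to delete all of $U=\{\text{edges incident to }s'\}$ together with $i-\lambda'$ further edges of $K_{n'-1}$; this yields the explicit formula~\eqref{eq:i}. You instead invoke equation~\eqref{eq:splitreliability} to obtain the clean decomposition $F_i(G)=b(n,m)\,C_{i-1}(G')+F_i(G')$, which immediately isolates the only $G$--dependent term $F_i(G')$, and then you classify all bipartitions $(V_1,V_2)$ of $G'$ by their cut size $|C(A)|$. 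Your case analysis on $a=|A|$ (the cases $a<\lambda'$ and $a\ge\lambda'$, with the check that the second case is vacuous for $n'\le 4$) rigorously proves $|C(A)|\ge n'-1$ for every $A$ with $1\le a\le n'-3$ --- the very fact the paper leaves implicit.

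What each approach buys: the paper's direct count is shorter and produces the closed form~\eqref{eq:i} for $F_i(G)$, but it relies on the reader supplying the bipartition argument. Your route makes that argument explicit and shows transparently that the only possible additional contribution at level $i=n'-2$ is the single partition isolating $t'$, which exists precisely when $s't'\notin G$. Your reduction via~\eqref{eq:splitreliability} also makes it immediate that the ``one bridge removed'' contribution $b(n,m)\,C_{i-1}(G')$ is constant across $T_{n,m}^{(\lambda')}$, whereas the paper re--derives this by counting. The boundary case $\lambda'=n'-2$, which you defer to Lemma~\ref{lemma:maxlambda}\ref{a2}, is handled the same way in both proofs.
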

\begin{proof}
Let $G$ be any two-terminal graph in $T_{n,m}^{\lambda'}$, where $\lambda'\leq n'-2$. We already know that 
its skeleton $G'$ is isomorphic to $B_{n',m'}$ and is equipped with 
two different terminals $s'$ and $t'$. By Lemma~\ref{lemma:maxlambda}\ref{a3} we can assume without loss of generality that $s'$ is the only vertex having degree $\lambda'$ in $G'$, i.e., $s'=v'$. 
As $\lambda'\leq n'-2$, by Remark~\ref{remark:minimum} we know that $G'$ has only one minimum separator. Let $U$ be the only minimum separator in $G'$ consisting of all edges in $G'$ that are incident to $s'$. Let $i \in \{\lambda',\lambda'+1,\ldots,n'-3\}$. 
If we remove $0$ bridge then a split subgraph of $G$ is obtained when we remove all edges in $U$ plus $i-\lambda'$ edges in $G'-U$. If we remove $1$ bridge then a split subgraph of $G$ is only obtained when we remove possibly some (but not all) edges in $U$ and the remaining edges in $G'-U$. Consequently,
\begin{equation}\label{eq:i}
F_{i}(G) = \binom{m'-\lambda'}{i-\lambda'}+
b(n,m)\sum_{j=0}^{\lambda'-1}\binom{\lambda'}{j}\binom{m'-\lambda'}{i-j-1},   
\end{equation}
and $F_{i}(G)$ does not depend on the choice of $G$. 

To prove the second part of the statement consider $i=n'-2$. 
On the one hand, if $s't'\in G'$ then the degree of $t'$ in $G'$ is $n'-1$. The previous reasoning holds to find all split subgraphs in $G$ and $F_{n'-2}(G)$ is ruled by Equation~\eqref{eq:i} letting $i=n'-2$. On the other hand, if $s't' \notin G'$ then the degree of $t'$ in $G'$ is $n'-2$. An additional split subgraph is obtained by removing precisely all $n'-2$ edges incident to $t'$. Therefore, 
$F_{n'-2}(G)$ is maximum if and only if $s't' \notin G$, and the lemma follows.
\end{proof}

We are in conditions to prove the main result of this section.\\

\emph{Proof of Theorem~\ref{theorem:local1}:} 
Let $(n,m)$ an arbitrary pair of integers in $I$. Define the numbers $n'$, $m'$ and $\lambda'$ by $n'=n-b(n,m)$, $m'=m-b(n,m)$, and $\lambda'=\lambda(n',m')$, respectively. If $\lambda' \in \{n'-1,n'-2\}$ 
then Lemma~\ref{lemma:firstcases} gives that $\mathcal{G}_{n,m}=[G_{n,m}]$. 
If otherwise, by Lemma~\ref{lemma:maxn} we get that $G$ is in $T_{n,m}^{n'-2}$ if and only if  
the skeleton $G'$ is isomorphic to $B_{n',m'}$ and its projected terminals $s'$ and $t'$ are nonadjacent. 
As the two-terminal balloon graph $G_{n,m}$ satisfies the previous conditions, $G_{n,m}$ belongs to $T_{n,m}^{n'-2}$. A direct computation of the split reliability of $G$ gives that the polynomial $SR_{G}(p)$ 
is given in equation~\eqref{eq:splitreliability}, regardless of the choice of the two-terminal graph $G$ in $T_{n,m}^{n'-2}$. By Remark~\ref{remark:splitequivalent}, each pair of two-terminal graphs in $T_{n,m}^{n'-2}$ are split-equivalent. By Remark~\ref{remark:splitequivalent}, $T_{n,m}^{n'-2}=\mathcal{G}_{n,m}$. 
Finally, as $G_{n,m}$ belongs to $T_{n,m}^{n'-2}$, we conclude that $T_{n,m}^{n'-2}=[G_{n,m}]$. 
Consequently, $\mathcal{G}_{n,m}=[G_{n,m}]$, and the theorem follows. \qed

 \section{Nonexistence of uniformly most split reliable graphs}\label{section:main}
In this section we will construct, for each pair of integers $n$ and $m$ such that $n\geq 7$ and $n \leq m\leq \binom{n-3}{2}+3$, a two-terminal graph $H_{n,m}$ in $T_{n,m}$ such that $N_{n-2}(H_{n,m})>N_{n-2}(G_{n,m})$. We will choose $H_{n,m}$ as one of the graphs in $\{H_{n,m}^{(0)},H_{n,m}^{(1)},H_{n,m}^{(2)}\}$, where the definitions of $H_{n,m}^{(0)}$, $H_{n,m}^{(1)}$ and $H_{n,m}^{(2)}$ are given in the following paragraphs. 
Throughout this section $s'$ is the vertex with minimum degree in $G_{n,m}'$ and $\lambda'$ is the degree of $s'$ in the subgraph $G_{n,m}'$ of $G_{n,m}$, i.e., $d_{G_{n,m}'}(s')=\lambda'$. 

\begin{definition}\label{def:H0}
Let $e_b$ and $e_0$ be two edges in $G_{n,m}$ such that $e_b$ is a bridge and $e_0$ is an edge in $G_{n,m}'$ where $s'$ is one of its endpoints. The two-terminal graph $H_{n,m}^{(0)}$ is obtained from $G$ by the contraction of $e_b$ followed by a subdivision of $e_0$. 
\end{definition}

\begin{definition}\label{def:H1}
Assume that there exists two vertices $y$ and $z$ in $G_{n,m}'$ that are not adjacent to $s'$. Let $e_1$ be $yz$ and $e_b$ be some bridge in $G_{n,m}$. The two-terminal graph $H_{n,m}^{(1)}$ is obtained from $G$ by the contraction of $e_b$ followed by a subdivision of $e_1$. 
\end{definition}

\begin{definition}\label{def:H2}
Let $e_b$ and $e_2$ be two edges in $G_{n,m}$ such that $e_b$ is a bridge, $e_2$ is in $G_{n,m}'$, and the endpoints of $e_2$ are both adjacent to $s'$. The two-terminal graph $H_{n,m}^{(2)}$ is obtained from $G$ by the contraction of $e_b$ followed by a subdivision of $e_2$. 
\end{definition}

\begin{figure}[ht]
\begin{center}
\scalebox{1.3}{
\begin{tabular}{c}
\begin{tikzpicture}[scale=1.2]
\coordinate(d) at (0.5,1.5);
\coordinate(c2) at (1,1);
\coordinate(c1) at (0,1); 
\coordinate(b2) at (1,0.5);
\coordinate(b1) at (0,0.5); 
\coordinate(a) at (0.5,0);
\coordinate(e) at (0.5,-0.5);
\coordinate(f) at (0.5,-1);
\coordinate(g) at (0.5,-1.5);

\draw (a)--(b1);
\draw (a)--(b2);
\draw (b1)--(b2);
\draw (b1)--(c1);
\draw (b1)--(c2);
\draw (b1)--(d);
\draw (b2)--(c1);
\draw (b2)--(c2);
\draw (b2)--(d);
\draw (c1)--(c2);
\draw (c1)--(d);
\draw (c2)--(d);
\draw (a)--(e);
\draw (e)--(f);
\draw (f)--(g);

\foreach \p in {a,b1,b2,c1,c2,d,e,f,g}
       \fill [black] (\p) circle (2.2pt);
\end{tikzpicture}
\\
\vspace{-1mm}
$B_{9,15}$
\end{tabular}
\begin{tabular}{c}
\begin{tikzpicture}[scale=1.2]
\coordinate[label=below:$t$](d) at (0.5,1.5);
\coordinate(c2) at (1,1);
\coordinate(c1) at (0,1); 
\coordinate(b2) at (1,0.5);
\coordinate(b1) at (0,0.5); 
\coordinate[label=left:$s'$](a) at (0.5,0);
\coordinate(e) at (0.5,-0.5);
\coordinate(f) at (0.5,-1);
\coordinate[label=left:$s$](g) at (0.5,-1.5);

\draw (a)--(b1);
\draw (a)--(b2);
\draw (b1)--(b2);
\draw (b1)--(c1);
\draw (b1)--(c2);
\draw (b1)--(d);
\draw (b2)--(c1);
\draw (b2)--(c2);
\draw (b2)--(d);
\draw (c1)--(c2);
\draw (c1)--(d);
\draw (c2)--(d);
\draw (a)--(e);
\draw (e)--(f);
\draw (f)--(g);

\foreach \p in {a,b1,b2,c1,c2,d,e,f,g}
       \fill [black] (\p) circle (2.2pt);
\end{tikzpicture}
\\
\vspace{-1mm}
$G_{9,15}$
\end{tabular}
\begin{tabular}{c}
\begin{tikzpicture}[scale=1.2]
\coordinate[label=below:$t$](d) at (0.5,1.5);
\coordinate(c2) at (1,1);
\coordinate(c1) at (0,1); 
\coordinate(b2) at (1,0.5);
\coordinate(b1) at (0,0.5); 
\coordinate(x) at (0.25,0.25); 
\coordinate[label=left:$s'$](a) at (0.5,0);
\coordinate(e) at (0.5,-0.5);
\coordinate[label=left:$s$](f) at (0.5,-1);
\coordinate(g) at (0.5,-1.5);

\draw (a)--(x);
\draw (x)--(b1);
\draw (a)--(b2);
\draw (b1)--(b2);
\draw (b1)--(c1);
\draw (b1)--(c2);
\draw (b1)--(d);
\draw (b2)--(c1);
\draw (b2)--(c2);
\draw (b2)--(d);
\draw (c1)--(c2);
\draw (c1)--(d);
\draw (c2)--(d);
\draw (a)--(e);
\draw (e)--(f);

\foreach \p in {a,b1,b2,c1,c2,d,e,f,x}
       \fill [black] (\p) circle (2.2pt);
\foreach \p in {g}
       \fill [white] (\p) circle (2.2pt);
\end{tikzpicture}
\\
\vspace{-1mm}
$H_{9,15}^{(0)}$
\end{tabular}
\begin{tabular}{c}
\begin{tikzpicture}[scale=1.2]
\coordinate[label=below:$t$](d) at (0.5,1.5);
\coordinate(c2) at (1,1);
\coordinate(x) at (0.5,1);
\coordinate(c1) at (0,1); 
\coordinate(b2) at (1,0.5);
\coordinate(b1) at (0,0.5); 
\coordinate[label=left:$s'$](a) at (0.5,0);
\coordinate(e) at (0.5,-0.5);
\coordinate[label=left:$s$](f) at (0.5,-1);
\coordinate(g) at (0.5,-1.5);

\draw (a)--(b1);
\draw (a)--(b2);
\draw (b1)--(b2);
\draw (b1)--(c1);
\draw (b1)--(c2);
\draw (b1)--(d);
\draw (b2)--(c1);
\draw (b2)--(c2);
\draw (b2)--(d);
\draw (c1)--(x);
\draw (x)--(c2);
\draw (c1)--(d);
\draw (c2)--(d);
\draw (a)--(e);
\draw (e)--(f);

\foreach \p in {a,b1,b2,c1,c2,d,e,f,x}
       \fill [black] (\p) circle (2.2pt);
\foreach \p in {g}
       \fill [white] (\p) circle (2.2pt);
\end{tikzpicture}
\\
\vspace{-1mm}
$H_{9,15}^{(1)}$
\end{tabular}
\begin{tabular}{c}
\begin{tikzpicture}[scale=1.2]
\coordinate[label=below:$t$](d) at (0.5,1.5);
\coordinate(c2) at (1,1);
\coordinate(c1) at (0,1); 
\coordinate(b2) at (1,0.5);
\coordinate(x) at (0.5,0.5);
\coordinate(b1) at (0,0.5); 
\coordinate[label=left:$s'$](a) at (0.5,0);
\coordinate(e) at (0.5,-0.5);
\coordinate[label=left:$s$](f) at (0.5,-1);
\coordinate(g) at (0.5,-1.5);

\draw (a)--(b1);
\draw (a)--(b2);
\draw (b1)--(b2);
\draw (b1)--(c1);
\draw (b1)--(c2);
\draw (b1)--(d);
\draw (b2)--(c1);
\draw (b2)--(c2);
\draw (b2)--(d);
\draw (c1)--(c2);
\draw (c1)--(d);
\draw (c2)--(d);
\draw (a)--(e);
\draw (e)--(f);

\foreach \p in {a,b1,b2,c1,c2,d,e,f,x}
       \fill [black] (\p) circle (2.2pt);
\foreach \p in {g}
       \fill [white] (\p) circle (2.2pt);
\end{tikzpicture}
\\
\vspace{-1mm}
$H_{9,15}^{(2)}$
\end{tabular}
}
\end{center}
\caption{Graph $B_{9,15}$ and two-terminal graphs 
$G_{9,15}$, $H_{9,15}^{(0)}$,  $H_{9,15}^{(1)}$ and $H_{9,15}^{(2)}$.\label{fig:GyH}}
\end{figure}
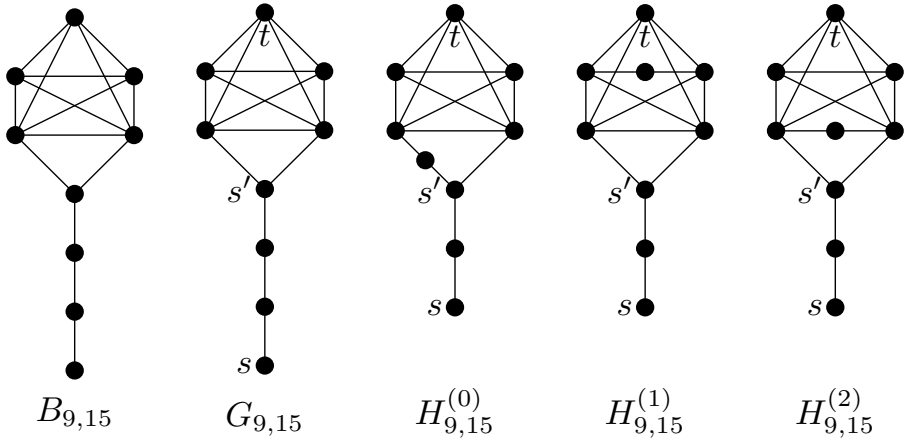

The reader can observe that, for each $i\in \{0,1,2\}$, 
the resulting two-terminal graph $H_{n,m}^{(i)}$ obtained by the contraction of a bridge $e_b$ followed by a subdivision of $e_i$ does not depend on the choices of the edges $e_b$ and $e_i$. Figure~\ref{fig:GyH} depicts the graph $B_{9,15}$ as well as the two-terminal graphs $G_{9,15}$, $H_{9,15}^{(0)}$, $H_{9,15}^{(1)}$ and $H_{9,15}^{(2)}$. 

For each $G$ in $T_{n,m}$ we denote $t_2(G)$ the number of split subgraphs of $G$ consisting of $2$ disjoint trees. The following remarks will be useful. 
\begin{remark}\label{remark:positive}
Let $(n,m)$ be in $I_1$ and $G$ be $G_{n,m}$. We will show that for any edge $e$ in $G'$ it holds that $t_2(G'-e)>0$. 
In fact, as $m\geq n$ we know that $G'$ has some cycle. Then, $G'-e$ has at least one spanning tree $T$. 
Pick some edge $e'$ in $T$ included in the only path between $s'$ and $t'$ in $T$. The split subgraph $T-e$ consists precisely of $2$ disjoint trees, and $t_2(G'-e)>0$.   
\end{remark}

\begin{remark}\label{remark:3bridges}
For each $(n,m)$ in $I$ such that $n\leq m\leq \binom{n-3}{2}+3$ it holds that $b(n,m)\geq 3$. In fact, by Proposition~\ref{proposition:balloon}, $b(n,\binom{n-3}{2}+3)=
b(B_{n,\binom{n-3}{2}+3})=3$. As the addition of edges in a connected graph does not create bridges, the sequence $b(n,m)$ is decreasing with respect to $m$ thus $b(n,m)\geq b(n,\binom{n-3}{2}+3)=3$. 
\end{remark}

\begin{lemma}\label{lemma:edge}
Let $(n,m)$ be in $I_1$. Let $G$ be $G_{n,m}$ and $H$ obtained from $G$ by a contraction of some bridge of $G$ followed by a subdivision of some edge $e$ in $G'$. Then,
\begin{equation}
N_{n-2}(H)-N_{n-2}(G)>(b(n,m)-1)t(G'-e)-t(G').    
\end{equation}
\end{lemma}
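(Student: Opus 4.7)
The plan is to interpret $N_{n-2}(\cdot)$ as a count of spanning $2$-forests and then reduce the inequality to a single positivity fact furnished by Remark~\ref{remark:positive}. Concretely, a split subgraph of a graph on $n$ vertices with exactly $n-2$ edges must split into two trees (the component sizes sum to $n$ and their edge counts sum to $n-2$, so each component is a tree). Hence $N_{n-2}(G)=t_2(G)$, the number of spanning $2$-forests whose components separate $s$ and $t$. I would write $b=b(n,m)$ and recall that the bridges of $G=G_{n,m}$ form a path $s=v_0,v_1,\dots,v_{b-1},v_b=s'$ attached to $G'$, with $t\in V(G')$.

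Next I would decompose $t_2(G)$ by the number of bridges deleted. Deleting $\geq 2$ bridges produces $\geq 3$ components, which is forbidden; deleting exactly one bridge leaves $G'$ intact inside one of the two trees, contributing $b\cdot t(G')$ configurations; deleting no bridge forces a split of $G'$ itself separating $s'$ from $t$, contributing $t_2(G')$. This yields
\begin{equation*}
t_2(G)=b\cdot t(G')+t_2(G').
\end{equation*}
The same bookkeeping applied to $H$, whose bridge chain has been shortened to length $b-1$ and whose skeleton $H'$ is the subdivision of $G'$ at $e$ with a new vertex $z$, gives $t_2(H)=(b-1)t(H')+t_2(H')$, with $H'$ carrying the terminals $s'$ and $t$.

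The core computation is a subdivision identity for both $t$ and $t_2$. For any spanning tree (respectively spanning $2$-forest separating $s'$ and $t$) of $G'$ containing $e=xy$, replacing $e$ by the path $x,z,y$ gives exactly one spanning tree (resp.\ $2$-forest) of $H'$; for any spanning tree (resp.\ $2$-forest) of $G'$ not containing $e$, attaching $z$ by either $xz$ or $zy$ gives exactly two. Conversely, every spanning tree/$2$-forest of $H'$ is obtained this way according to the degree of $z$ in it. Using the deletion--contraction identity $t_e(G')=t(G')-t(G'-e)$, and its obvious analogue $t_2^e(G')=t_2(G')-t_2(G'-e)$ via the bijection between $e$-avoiding $2$-forests of $G'$ and $2$-forests of $G'-e$, one obtains
\begin{equation*}
t(H')=t(G')+t(G'-e),\qquad t_2(H')=t_2(G')+t_2(G'-e).
\end{equation*}

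Finally I would subtract. Plugging the two subdivision identities into the bridge decompositions,
\begin{equation*}
N_{n-2}(H)-N_{n-2}(G)=(b-1)\bigl(t(G')+t(G'-e)\bigr)+t_2(G')+t_2(G'-e)-b\cdot t(G')-t_2(G'),
\end{equation*}
which simplifies to $(b-1)t(G'-e)-t(G')+t_2(G'-e)$. Since $(n,m)\in I_1$, Remark~\ref{remark:positive} gives $t_2(G'-e)>0$, and the strict inequality follows. The main obstacle I anticipate is the careful verification of the subdivision bijection for $t_2$ (distinguishing by $\deg_{H'}(z)\in\{1,2\}$ and checking that the separation between $s'$ and $t$ is preserved in both directions); the rest is routine once the bridge decomposition is justified by arguing that $\geq 2$ bridge deletions cannot yield a $2$-component spanning subgraph.
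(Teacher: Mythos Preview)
Your proposal is correct and follows essentially the same approach as the paper: both decompose $N_{n-2}$ according to how many bridges are deleted to get $N_{n-2}(G)=b\,t(G')+t_2(G')$ and $N_{n-2}(H)=(b-1)t(H')+t_2(H')$, establish the subdivision identities $t(H')=t(G')+t(G'-e)$ and $t_2(H')=t_2(G')+t_2(G'-e)$, subtract, and finish with $t_2(G'-e)>0$ from Remark~\ref{remark:positive}. The only cosmetic difference is that the paper obtains the subdivision identities via deletion--contraction on one of the two new edges ($t(H')=t(H'-e_1)+t(H'*e_1)$ and the analogous formula for $t_2$), whereas you phrase the same bijection directly in terms of $\deg_{H'}(z)$.
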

\begin{proof}
As $(n,m)$ is in $I_1$, by Proposition~\ref{proposition:balloon} we get that  $b(n,m)=b(B_{n,m})\geq 1$. By construction, $H$ has precisely $b(n,m)-1$ bridges. 
Now, let us count $N_{n-2}(G)$ and $N_{n-2}(H)$. 
If we remove $2$ or more bridges in $G$ or in $H$ then we will obtain more than $2$ connected components thus no split subgraphs are produced in this way. 
On the one hand, if we remove $0$ bridge then we must count split subgraph consisting of $2$ disjoint trees in the respective skeletons 
$G'$ or $H'$. On the other hand, for each choice of $1$ bridge in $G$ or $H$ there are as many split subgraphs in $G$ or $H$ as spanning trees in $G'$ or $H'$. Then, 
\begin{align}
N_{n-2}(G) &= b(n,m)t(G')+t_2(G'), \label{eqG}\\
N_{n-2}(H) &= (b(n,m)-1)t(H')+t_2(H'), \label{eqH}
\end{align}    
Let $e_1$ and $e_2$ be the resulting edges in $H$ after the subdivision of $e$ in $G$. Observe that $t(H'-e_1)=t(G'-e)$ and $t(H'*e_1)=t(G')$. Then,
\begin{equation}
t(H') = t(H'-e_1)+t(H'*e_1)=t(G'-e)+t(G'). \label{eqH1}
\end{equation}
Analogously,
\begin{equation}\label{eqH2}
t_2(H') = t_2(G'-e)+t_2(G').    
\end{equation}
Replacing Equations~\eqref{eqH1}~and~\eqref{eqH2} into \eqref{eqH},
\begin{equation}\label{eqH3}
N_{n-2}(H) = (b(n,m)-1)(t(G'-e)+t(G'))+t_2(G'-e)+t_2(G'). 
\end{equation}
Now, we can take the difference between Equations~\eqref{eqH3} and~\eqref{eqG} and use Remark~\ref{remark:positive} 
to get that
\begin{equation*}
N_{n-2}(H)-N_{n-2}(G)=(b(n,m)-1)t(G'-e)-t(G')+t_2(G'-e)>(b(n,m)-1)t(G'-e)-t(G'). \qedhere    
\end{equation*}
\end{proof}

\begin{lemma}\label{lemma:1}
Let $n$ and $m$ be integers such that $n\geq 7$ and $n \leq m \leq \binom{n-3}{2}+3$ and let $G$ be $G_{n,m}$. 
If $G'$ has minimum degree $\lambda'$ such that $\lambda'\geq 3$ then $N_{n-2}(H_{n,m}^{(0)})>N_{n-2}(G)$.
\end{lemma}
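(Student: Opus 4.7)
My plan is to apply Lemma~\ref{lemma:edge} with $e = e_0$ and then reduce the remaining task to an explicit spanning-tree comparison that can be settled by Bogdanowicz's closed formula. Since $G = G_{n,m}$ is the two-terminal balloon graph and $(n,m)$ lies in $I_1$ (as $\binom{n-3}{2}+3 < \binom{n-1}{2}+2$ for $n\geq 7$), its skeleton $G'$ is isomorphic to $B_{n',m'}$ with $(n',m')\in I_0$. Under the hypothesis $\lambda'\geq 3$ (which forces $n' \geq \lambda'+1\geq 4$), this $B_{n',m'}$ is precisely the threshold graph $H(n';\lambda')$, namely a clique $K_{n'-1}$ together with the vertex $s'$ of degree $\lambda'$. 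Because $e_0$ is an edge of $G'$ incident to $s'$, deleting it yields another threshold graph of the same family, $G'-e_0 \cong H(n';\lambda'-1) = B_{n',m'-1}$.

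Lemma~\ref{lemma:edge} combined with Remark~\ref{remark:positive} then gives
$$N_{n-2}(H_{n,m}^{(0)}) - N_{n-2}(G) > (b(n,m)-1)\,t(G'-e_0) - t(G'),$$
so it suffices to show that $(b(n,m)-1)\,t(G'-e_0) \geq t(G')$. I would next apply Equation~\eqref{eq:bogdanowicz} with $k=1$, $d_0 = n'-1$, $d_2 = 1$, and $d_1\in\{\lambda',\lambda'-1\}$ to obtain
$$t(G') = (n'-1)^{n'-\lambda'-2}\,\lambda'\,(n')^{\lambda'-1}, \qquad t(G'-e_0) = (n'-1)^{n'-\lambda'-1}\,(\lambda'-1)\,(n')^{\lambda'-2}.$$
Dividing, the inequality $(b(n,m)-1)\,t(G'-e_0)\geq t(G')$ is equivalent to
$$(b(n,m)-1)(n'-1)(\lambda'-1) \geq n'\lambda'.$$

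Finally, I would verify this reduced inequality. By Remark~\ref{remark:3bridges} we have $b(n,m)\geq 3$, so it is enough to prove $2(n'-1)(\lambda'-1)\geq n'\lambda'$, which after simplification is equivalent to $(n'-2)(\lambda'-2)\geq 2$, and this is immediate from $n'\geq 4$ and $\lambda'\geq 3$. The main obstacle is the structural observation that $G'-e_0$ is itself a balloon/threshold graph of the same form but with parameter $\lambda'-1$, so that both spanning-tree counts come from a single Bogdanowicz template. The resulting arithmetic bound is sharp at $(b,n',\lambda') = (3,4,3)$, where it yields equality $(b-1)t(G'-e_0) = t(G')$; this is precisely the case in which the strict inequality supplied by Lemma~\ref{lemma:edge} via the positive term $t_2(G'-e_0)$ is indispensable for concluding $N_{n-2}(H_{n,m}^{(0)}) > N_{n-2}(G)$.
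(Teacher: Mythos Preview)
Your proof is correct and follows essentially the same route as the paper: you invoke Lemma~\ref{lemma:edge}, compute $t(G')$ and $t(G'-e_0)$ via Bogdanowicz's formula for the threshold graphs $H(n';\lambda')$ and $H(n';\lambda'-1)$, and finish with the bounds $b(n,m)\ge 3$, $n'\ge 4$, $\lambda'\ge 3$ from Remark~\ref{remark:3bridges} and the hypothesis. The only cosmetic difference is in the last step, where the paper bounds the ratio $t(G'-e)/t(G')=\tfrac{(\lambda'-1)(n'-1)}{\lambda' n'}\ge \tfrac{1}{2}$ directly, whereas you rewrite the target inequality as $(n'-2)(\lambda'-2)\ge 2$; these are equivalent manipulations.
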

\begin{proof}
By our assumption, $n\geq 7$ and $n \leq m \leq \binom{n-3}{2}+3$. Then, Remark~\ref{remark:3bridges} gives that $b(n,m)\geq 3$. Let $s'$ be the vertex with minimum degree $\lambda'$ in $G'$, and let $n'$ be the number of vertices in $G'$. By our assumption the edge connectivity of $G'$ is at least $3$ thus $G'$ has at least $4$ vertices, i.e., $n'\geq 4$. 
Let us pick any edge $e$ in $G'$ such that one of its endpoints is $s'$ 
and one bridge $e_b$ in $G$. Construct the two-terminal graph $H_{n,m}^{(0)}$ 
from $G$ by the contraction of $e_b$ followed by a subdivision of $e$. By Lemma~\ref{lemma:edge}, 
\begin{equation}\label{eq:partial1}
N_{n-2}(H_{n,m}^{(0)})-N_{n-2}(G)>(b(n,m)-1)t(G'-e)-t(G').    
\end{equation}
Now, we will use Bogdanowicz formula given in Equation~\eqref{eq:bogdanowicz} to find $t(G')$ and $t(G'-e)$. On the one hand, $G'$ is the graph $K_{n'-1}$ plus one vertex $s'$ adjacent to precisely $\lambda'$ vertices of $K_{n'-1}$. Therefore, $G' \cong H(n';\lambda')$. On the other hand, $G'-e \cong H(n';\lambda'-1)$. Applying Equation~\eqref{eq:bogdanowicz} to both $G'$ and $G'-e$,
\begin{align*}
t(G')&= t(H(n';\lambda'))=\lambda'(n')^{\lambda'-1}(n'-1)^{n'-\lambda'-2};\\
t(G'-e)&= t(H(n';\lambda'-1))=(\lambda'-1)(n')^{\lambda'-2}(n'-1)^{n'-\lambda'-1}.
\end{align*}
Consequently, 
\begin{equation}\label{eq:half1}
t(G'-e) = \frac{\lambda'-1}{\lambda'}\frac{n'-1}{n'}t(G')\geq \left(\frac{2}{3}\right)\left(\frac{3}{4}\right)t(G')=\frac{t(G')}{2},
\end{equation}
where we used that $\lambda'\geq 3$ and $n'\geq 4$. Replacing~\eqref{eq:half1} into~\eqref{eq:partial1} and then using that $b(n,m)\geq 3$ gives 
\begin{equation*}
N_{n-2}(H_{n,m}^{(0)})-N_{n-2}(G)>(b(n,m)-1)\frac{t(G')}{2}-t(G')\geq 0,
\end{equation*}
thus $N_{n-2}(H_{n,m}^{(0)})>N_{n-2}(G)$, and the lemma follows.
\end{proof}

\begin{lemma}\label{lemma:2}
Let $n$ and $m$ be integers such that $n\geq 7$ and $n \leq m \leq \binom{n-3}{2}+3$ and let $G$ be $G_{n,m}$. 
If $G'$ has $n'$ vertices and minimum degree $\lambda'$ such that $n'\geq 5$ and $\lambda'\leq n'-3$,  
then $N_{n-2}(H_{n,m}^{(1)})>N_{n-2}(G)$.    
\end{lemma}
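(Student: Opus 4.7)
My plan is to mirror the proof of Lemma~\ref{lemma:1}. First I would apply Lemma~\ref{lemma:edge} with $H = H_{n,m}^{(1)}$ and $e = e_1 = yz$, obtaining
\[
N_{n-2}(H_{n,m}^{(1)}) - N_{n-2}(G) > (b(n,m)-1)\, t(G'-e_1) - t(G'),
\]
and use Remark~\ref{remark:3bridges} to obtain $b(n,m) \geq 3$. It then suffices to verify $2\, t(G'-e_1) \geq t(G')$, so that the right-hand side above is nonnegative and the strict inequality furnished by Lemma~\ref{lemma:edge} gives the conclusion.

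The crucial step is to compute the ratio $t(G'-e_1)/t(G')$ via Bogdanowicz's formula (Equation~\eqref{eq:bogdanowicz}). The skeleton $G'$ is the threshold graph $H(n';\lambda')$, so as in Lemma~\ref{lemma:1},
\[
t(G') = \lambda'(n')^{\lambda'-1}(n'-1)^{n'-\lambda'-2}.
\]
The hypothesis $\lambda' \leq n'-3$ precisely guarantees the existence of two distinct vertices $y, z$ in $G'$ not adjacent to $s'$. Removing $e_1 = yz$ makes $\{s', y, z\}$ an independent set whose complement still induces a complete graph on $n'-3$ vertices, and the adjacencies between this clique and $\{s', y, z\}$ exhibit $G'-e_1$ as the threshold graph $H(n'; n'-3, n'-3, \lambda')$ (using $\lambda' \leq n'-3$ to order the degrees). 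Applying Equation~\eqref{eq:bogdanowicz} with $k=3$, $d_0 = d_1 = d_2 = n'-3$, $d_3 = \lambda'$, $d_4 = 1$, the factors of $n'-3$ simplify against $(n-k)^{-2}$ and leave
\[
t(G'-e_1) = (n'-3)(n'-1)^{n'-\lambda'-3}\, \lambda'(n')^{\lambda'-1}, \qquad \frac{t(G'-e_1)}{t(G')} = \frac{n'-3}{n'-1}.
\]

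Since $n'\geq 5$, this ratio is at least $1/2$; combined with $b(n,m)-1 \geq 2$, we obtain $(b(n,m)-1)\, t(G'-e_1) \geq t(G')$, so the right-hand side of the bound from Lemma~\ref{lemma:edge} is nonnegative, and the strict inequality of that lemma (arising from $t_2(G'-e_1)>0$, itself guaranteed by Remark~\ref{remark:positive}) yields the desired $N_{n-2}(H_{n,m}^{(1)}) > N_{n-2}(G)$. The main obstacle is to justify cleanly that $G'-e_1$ is again a threshold graph with the stated parameters; this rests on checking that $s'$, $y$, and $z$ are pairwise nonadjacent after the removal of $e_1$ and that the remaining $n'-3$ vertices still induce a complete subgraph, which both follow directly from the balloon structure of $B_{n',m'}$ together with the placement of $y$ and $z$ inside its clique part but outside the neighborhood of $s'$.
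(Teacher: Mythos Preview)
Your proposal is correct and follows essentially the same approach as the paper. The only cosmetic difference is that the paper represents $G'-e_1$ as the threshold graph $H(n';\,n'-3,\lambda')$ (taking the clique on $n'-2$ vertices and $\{s',y\}$ as the independent set), whereas you represent it as $H(n';\,n'-3,n'-3,\lambda')$ (taking the clique on $n'-3$ vertices and $\{s',y,z\}$ as the independent set); both representations are valid and Equation~\eqref{eq:bogdanowicz} yields the identical value $t(G'-e_1)=(n'-3)\lambda'(n')^{\lambda'-1}(n'-1)^{n'-\lambda'-3}$ in either case.
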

\begin{proof}
By our assumption, $n\geq 7$ and $n \leq m \leq \binom{n-3}{2}+3$. Then, Remark~\ref{remark:3bridges} gives that $b(n,m)\geq 3$. 
Let $s'$ be the vertex with minimum degree $\lambda'$ in $G'$. As $\lambda'\leq n'-3$, there exists at least two vertices $y$ and $z$ in $G'$ that are not adjacent to $s'$. Let $e_b$ be some bridge in $G$. Construct the two-terminal graph $H_{n,m}^{(1)}$ from $G$ by the contraction of $e_b$ followed by a subdivision of the edge $e=yz$. 
By Lemma~\ref{lemma:edge}, 
\begin{equation}\label{eq:partial2}
N_{n-2}(H_{n,m}^{(1)})-N_{n-2}(G)>(b(n,m)-1)t(G'-e)-t(G').    
\end{equation}
Now, we will use Bogdanowicz formula given in Equation~\eqref{eq:bogdanowicz} to find $t(G')$ and $t(G'-e)$. On the one hand, $G'\cong H(n';\lambda')$. 
On the other hand, $G'-e'$ consists of a complete graph $K_{n'-2}$ plus two 
independent vertices $\{s',y\}$ such that $s'$ is adjacent to $\lambda'$ vertices and $y$ is adjacent to all vertices of $K_{n'-2}$ except for $z$. Therefore, $G'-e \cong H(n';n'-3,\lambda')$. 
Applying Equation~\eqref{eq:bogdanowicz} to both $G'$ and $G'-e$,
\begin{align*}
t(G')&= t(H(n';\lambda'))=\lambda'(n')^{\lambda'-1}(n'-1)^{n'-\lambda'-2};\\
t(G'-e)&= t(H(n';n'-3,\lambda')) = \lambda'(n')^{\lambda'-1}(n'-3)(n'-1)^{n'-\lambda'-3}.
\end{align*}
Consequently, 
\begin{equation}\label{eq:half2}
t(G'-e) = \frac{n'-3}{n'-1}t(G')\geq \frac{t(G')}{2},
\end{equation}
where we used in the inequality that $n'\geq 5$. Replacing~\eqref{eq:half2} into~\eqref{eq:partial2} and using that $b(n,m)\geq 3$ gives 
\begin{equation*}
N_{n-2}(H_{n,m}^{(1)})-N_{n-2}(G)>(b(n,m)-1)\frac{t(G')}{2}-t(G')\geq 0,
\end{equation*}
thus $N_{n-2}(H_{n,m}^{(1)})>N_{n-2}(G)$, and the lemma follows.
\end{proof}

\begin{lemma}\label{lemma:3}
Let $n$ and $m$ be integers such that $n\geq 7$ and $n \leq m \leq \binom{n-3}{2}+3$ and let $G$ be $G_{n,m}$. 
If $G'$ has $n'$ vertices and minimum degree $\lambda'$ 
such that $n'=4$ and $\lambda'=2$ then $N_{n-2}(H_{n,m}^{(2)})>N_{n-2}(G)$.    
\end{lemma}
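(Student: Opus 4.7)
The plan is to apply Lemma~\ref{lemma:edge} once the skeleton $G'$ and the subdivided edge $e_2$ are identified explicitly, and to check that the ratio $t(G'-e_2)/t(G')$ is large enough that the factor $b(n,m)-1$ compensates for the loss of one bridge.

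First I would pin down $G'$. The hypotheses $n'=4$ and $\lambda'=2$, together with $G' \in \mathcal{B}_0$ (from Lemma~\ref{lemma:intermediate}\ref{i1}), force $m'=5$, so $G'$ is the balloon $B_{4,5}$, which is $K_4$ with one edge removed. Since $\lambda' = n'-2$, Lemma~\ref{lemma:maxlambda}\ref{a2} forces the projected terminals $s', t'$ to be exactly the endpoints of the missing edge. Writing the other two vertices as $u$ and $v$, the neighborhood of $s'$ in $G'$ is $\{u,v\}$, so the only edge of $G'$ whose endpoints are both adjacent to $s'$ is $uv$, and hence $e_2 = uv$ in Definition~\ref{def:H2}.

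Next I would compute the two spanning-tree numbers required by Lemma~\ref{lemma:edge}. Observing that $G' \cong H(4;2)$ and applying Bogdanowicz's formula~\eqref{eq:bogdanowicz} gives $t(G') = 8$. Removing $e_2 = uv$ leaves the 4-cycle on $\{s',u,t',v\}$, so $t(G'-e_2) = 4$. Plugging these values into Lemma~\ref{lemma:edge} yields
$$N_{n-2}(H_{n,m}^{(2)}) - N_{n-2}(G) \;>\; (b(n,m)-1)\,t(G'-e_2) - t(G') \;=\; 4\bigl(b(n,m) - 3\bigr).$$
By Remark~\ref{remark:3bridges}, the assumptions $n\geq 7$ and $m \leq \binom{n-3}{2}+3$ guarantee $b(n,m) \geq 3$, so the right-hand side is nonnegative and the strict inequality gives the desired bound $N_{n-2}(H_{n,m}^{(2)}) > N_{n-2}(G)$.

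The only delicate point is that this estimate is tight in the extreme case $b(n,m)=3$, where the right-hand side vanishes; the argument then relies crucially on the strictness of Lemma~\ref{lemma:edge}, which in turn comes from the positive contribution $t_2(G'-e_2) > 0$. In our situation $G'-e_2$ is a 4-cycle with antipodal terminals and one directly checks that $t_2(C_4) = 4$, so the slack is in fact comfortable; nevertheless, any write-up that simplifies by dropping the $t_2(G'-e_2)$ term must explicitly retain the strict inequality from Lemma~\ref{lemma:edge} to handle the boundary case $b(n,m)=3$.
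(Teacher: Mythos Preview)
Your proof is correct and in fact tidier than the paper's. The paper handles this case by computing the four numbers $t(G')$, $t_2(G')$, $t(H')$, $t_2(H')$ directly and plugging them into the explicit formulas $N_{n-2}(G)=b(n,m)\,t(G')+t_2(G')$ and $N_{n-2}(H)=(b(n,m)-1)\,t(H')+t_2(H')$, rather than invoking Lemma~\ref{lemma:edge}. Your route---applying Lemma~\ref{lemma:edge} exactly as in Lemmas~\ref{lemma:1} and~\ref{lemma:2}---is more uniform with the rest of the section and reduces the arithmetic to the two values $t(G')=8$ and $t(G'-e_2)=4$. Your closing remark about the boundary case $b(n,m)=3$ is also on point: the conclusion there rests entirely on the strict inequality in Lemma~\ref{lemma:edge}, i.e., on $t_2(G'-e_2)>0$.

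As a side benefit, your approach avoids the paper's apparent miscount: the paper records $t(G')=4$ and $t(H')=8$, whereas $K_4-e$ has $8$ spanning trees and its subdivision $H'$ has $12$ (your Bogdanowicz computation and the $4$-cycle count confirm this). The paper's $t_2$ values are correct, and the two errors in $t$ happen to leave the final inequality intact, but your argument is correct throughout.
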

\begin{proof}
By our assumption, $n\geq 7$ and $n \leq m \leq \binom{n-3}{2}+3$. Then, Remark~\ref{remark:3bridges} gives that $b(n,m)\geq 3$. In the conditions of the statement $G' \cong K_4-e$ and its terminals are nonadjacent. Let 
$\{y,z,s',t'\}$ the vertices in $G'$ and let $e_b$ be any bridge of $G$. 
Construct the two-terminal graph $H_{n,m}^{(2)}$ from $G$ by the contraction of $e_b$ followed by a subdivision of the edge $e=yz$. 
A simple counting shows that $t(G')=4$ and $t_2(G')=8$, while 
$t(H')=8$ and $t_2(H')=12$. Consequently,
\begin{align*}
N_{n-2}(G) &= b(n,m)t(G')+t_2(G')=4b(n,m)+8;\\
N_{n-2}(H)&=(b(n,m)-1)t(H')+t_2(H')=8(b(n,m)-1)+12=8b(n,m)+4.
\end{align*}
As $b(n,m)\geq 3$ we get that $N_{n-2}(H)=4b(n,m)+(4b(n,m)+4)>4b(n,m)+8=N_{n-2}(G)$, and the lemma follows.
\end{proof}

We are in position to prove Proposition~\ref{proposition:near0}.\\

\emph{Proof of Proposition~\ref{proposition:near0}:} 
Let $n$ and $m$ be two integers as in the statement. 
As $B_{n',m'}$ has no bridges, $\lambda'\geq 2$. 
If $\lambda'>2$ then, by Lemma~\ref{lemma:1}, the graph $H$ given by $H_{n,m}^{(0)}$ satisfies that 
$N_{n-2}(H)>N_{n-2}(G_{n,m})$. If $\lambda'=2$ then, clearly, $n'\geq 3$. If $\lambda'=2$ and $n'\geq 5$ then, by Lemma~\ref{lemma:2}, the graph $H$ given by $H_{n,m}^{(1)}$ 
satisfies that $N_{n-2}(H)>N_{n-2}(G_{n,m})$. If $\lambda'=2$ and $n'=3$ then $n=m$ and it has been already proved by Brown and McMullin~\cite{2023-Brown} that there exists a two-terminal graph $H_{n,n}$ in $T_{n,n}$ such that $N_{n-2}(H_{n,n})>N_{n-2}(G_{n,n})$ when $n\geq 6$. Finally, if $\lambda'=2$ and $n'=4$ then, by Lemma~\ref{lemma:3}, the graph $H$ given by $H_{n,m}^{(2)}$ satisfies that $N_{n-2}(H)>N_{n-2}(G_{n,m})$. The proposition follows. \qed 
\\ \\

Once we proved Proposition~\ref{proposition:near0} we know from Section~\ref{section:strategy} that Theorem~\ref{theorem:nonexistence} also holds. Thus far, the decision of existence or nonexistence of uniformly most split reliable graphs in 
 each of the classes $T_{n,m}$ such that $n\geq 7$ and $\binom{n-3}{2}+4 \leq m \leq \binom{n}{2}-2$ is an open problem. Observe that if $G$ is uniformly most split reliable two-terminal in any of the previous nonempty classes $T_{n,m}$ then, by Theorem~\ref{theorem:local1}, $G$ is equivalent to $G_{n,m}$ and thus $G_{n,m}$ would also be uniformly most split reliable in $T_{n,m}$. Future work should be carried out to determine whether or not $G_{n,m}$ is uniformly most split reliable in such nonempty sets $T_{n,m}$. 

\section*{Acknowledgments}
This work is partially supported by City University of New York project entitled \emph{On the problem of characterizing graphs with maximum number of spanning trees}, grant number 66165-00. I thank Dr. Mart\'in Safe for his helpful comments that improved the presentation of this manuscript.

\end{document}